 \newtheorem{theorem}{Theorem}[section]
 \newtheorem{lem}[theorem]{Lemma}
 \newtheorem{prop}[theorem]{Proposition}
 \newtheorem{cor}[theorem]{Corollary}
 \theoremstyle{definition}
 \newtheorem{definition}[theorem]{Definition}
 \newtheorem{example}[theorem]{Example}
 \theoremstyle{remark}
 \newtheorem{remark}[theorem]{Remark}
 \numberwithin{equation}{section}
 \newcommand{\N}{\mbox{$\mathbb{N}$}}
 \newcommand{\Z}{\mbox{$\mathbb{Z}$}}
\newcommand{\mS}{\mathcal{S}_R}
\newcommand{\mN}{\mathcal{P}_R}
\newcommand{\mV}{\mathcal{V}}
\newcommand{\ud}{\mbox{\rm udim}}
\newcommand{\ess}{\leqslant_e}
\newcommand{\ann}{\mbox{\rm ann}}
\begin{document}

 \title[    Sum-Essential   Graphs of Modules ]{Sum-Essential   Graphs of Modules}

 \author[J.Matczuk]{Jerzy Matczuk}
  \address{Institute of Mathematics, Warsaw University,  Banacha 2, 02-097 Warsaw, Poland}
\email{\textcolor[rgb]{0.00,0.00,0.84}{jmatczuk@mimuw.edu.pl}}
\author[A.Majidinya]{Ali  Majidinya}
 \address{Department of
Mathematical Sciences, Salman Farsi University of Kazerun, Kazerun,
Iran, P.O. Box 73175-457.}
 \email{\textcolor[rgb]{0.00,0.00,0.84}{ali.majidinya@gmail.com and
ali.majidinya@kazerunsfu.ac.ir}}

 \subjclass[2010]{05C25, 05C40, 16D99.}

 \keywords{sum-essential graph, essential submodule, uniform dimension}


 \begin{abstract}
 The  sum-essential   graph  $  \mS(M) $     of
 a left $R$-module $M$ is a graph whose
 vertices are all nontrivial submodules of $M$ and two distinct
submodules are adjacent iff their  sum   is an
essential submodule of $M$. Properties of the graph $\mS(M)$ and
  its subgraph $\mN(M)$ induced by vertices which are not essential
  as submodules of $M$ are investigated.   The interplay between module properties
  of $M$  and  properties of those graphs   is   studied.
\end{abstract}
 \maketitle
 \section*{Introduction} Throughout the paper a ring means a
  unital associative  ring    and a module is a  left unitary module over a given ring $R$.

There are many studies on  various graphs associated to modules,
rings and other algebraic structures.
  The aim of the paper is to investigate
  the interplay between module properties of a module $M$
   and  properties of its sum-essential  and proper sum-essential graphs. This will result in classification of    modules in terms of some specific properties of those  graphs.

  For a given module $M$ the \it sum-essential graph \rm of $M$ is a simple graph $\mS(M)$ (i.e. unweighted, undirected graph containing no graph loops or multiple edges) whose vertices are nontrivial submodules (i.e. different from 0 and $M$)   and two distinct vertices  are adjacent if and only if the sum of the corresponding submodules is an essential submodule of $M$.
 We will also study the  \it proper sum-essential graph \rm of $M$ which is a subgraph of $\mS(M)$ generated by  vertices which, as submodules of $M$ are not essential.

The graph $\mS(M)$ was introduced and studied by  Amjadi in \cite{amja1} in a very
special case when $R$ is a commutative ring and $M={_R}R$.
There are other graphs  relating submodule structure of a given module. For example
  comaximal left ideal graphs,  i.e. graphs  whose vertices are nontrivial left   ideals of a ring $R$ and two vertices $I,J$ are
adjacent if and only if $I+J=R$ were considered in \cite{ami,wa,wan2}.
   The intersection graph of a module, i.e. a simple graph  whose vertices are
     nontrivial submodules and two distinct vertices are adjacent if   the intersection of
     corresponding submodules is nonzero, was introduced and investigated in \cite{red}. Its
        complement graph was  also examined. Earlier those graphs were considered in the context
        of left ideals of a given ring (see \cite{ A2, A1}). In \cite{ak2} the authors considered the inclusion graph whose vertices  were nontrivial left   ideals of a ring.

The first, introductional section contains elementary observations needed later on.
   We also show that    $\mS(M)$ and $\mN(M)$ are connected graphs of diameter not bigger than 3.

 Section 2 concentrates on modules $M$ such that the associated  graphs  contains vertices of small degree.   Theorems \ref{thm finite deg S(M)} and \ref{thm finite deg P(M)} show  that if every vertex  of $\mS(M)$ ($\mN(M)$) is of finite degree  then the graph has only finitely many vertices. An information about the structure of such modules is also given. The remaining part of this section is mainly devoted to  investigation of  modules $M$ such that $\mN(M)$ contains a vertex of degree 1.   Theorem \ref{Theorem degree 1 gen. case} offers necessary and sufficient condition for a submodule $U$ of $M$ to be of  degree 1 as a vertex in $\mN(M)$.  It also appears (see Theorem \ref{Thm largest submod deg 1}) that either all submodules of degree one in $\mN(M)$ are simple or $M$ contains the unique largest submodule of degree 1, examples are presented.

 Section 3  concentrates on modules whose graphs are complete, $k$-regular and triangle-free or a tree.  For example, Theorem \ref{thm complete graph} describes modules $M$  with complete graph $\mS(M)$.   Theorem \ref{triangle1} and \ref{treeta} give  necessary and sufficient conditions for $\mN(M)$ to be triangle-free and a tree, respectively. The girth of $\mS(M)$ and $\mN(M)$ is also described.

\section{\bf Preliminaries}
If not specified otherwise $M$  denotes a left module over a given
ring $R$. We write $A \ess  M$ if $A$ is an essential submodule of
$M$ (i.e. if $A\cap B\ne 0$, for any nonzero submodule $B$ of $M$), $\ud(M)$ stands for the   \it uniform dimension \rm of $M$. For
$m\in M$,   $\ann(m)$ denotes the annihilator of $m$ in $R$.   The reader
can consult \cite{la1} and \cite{la2} for precise definitions and
properties needed in the text.

 Let $G$ be a simple graph. The vertex set of $G$ is   denoted by $\mV(G)$, $\deg_G(v)$ stands for the  \it degree \rm of $v\in \mV(G)$, i.e.
    the cardinality of the set of all vertices  which are adjacent to $v$.   The \it
     maximum and minimum  degrees  \rm   of the graph   $G$  are
      the maximum and minimum degree of its vertices and are
       denoted by $\Delta(G)$ and $\delta(G)$, respectively. $G$ is a complete graph
       if every pair of distinct vertices of $G$ are adjacent, $\mathcal{K}_n$ will stand for a
       complete graph with $n\in \N$ vertices.  The graph $G$ is  \it  $k$-regular\rm , if $\deg_{G}(v)=k<\infty$ for every $v\in \mV(G)$.

  Let  $u, v\in \mV(G)$. We say that $u$ is a \it universal vertex \rm of $G$ if $u$ is adjacent to  all other vertices of $G$ and
     write $u\backsim v$ if $u$ and $v$ are  adjacent. The distance $d(u,v) $ is
      the length of the shortest path from $u$ to $v$ if such path exists, otherwise
   $d(x,y)=\infty$. The \it diameter \rm of $G$   is $\mbox{\rm diam}(G)=sup \{ d(u,v)\mid u,v\in \mV(G)\}$.

  The \it girth \rm of a graph $G$,  denoted by $g(G)$, is the length of a shortest cycle in $G$. If $G$ has no cycles, then   $g(G)=\infty$.

 A subset $S\subseteq \mV(G)$ is \emph{independent} if no two vertices of $S$ are adjacent.
 For a positive integer $k$,
a \it $k$-partite \rm  graph is a graph whose vertices   can be partitioned into $k$ nonempty independent sets.

 Let us recall that if $M$ is a left module over the ring $R$,  then:

\begin{itemize}
  \item  The \it sum-essential graph \rm of $M$ is a simple
   graph $\mS(M)$ such that $\mV(\mS(M))$ consists of all
    nontrivial submodules of $M$ and two vertices  are adjacent if
    and only if the sum of the corresponding submodules is an essential submodule of $M$.
  \item  The \it proper sum-essential graph \rm of $M$, denoted by $\mN(M)$, is a subgraph of
   $\mS(M)$ induced by  vertices which are not essential  submodules of $M$.
\end{itemize}

Let us remark that $\mS(M)$ has exactly one vertex iff $M$ contains a
 submodule $B$ such that both $B$ and $M/B$ are simple modules. On the other
 hand, if the graph $\mN(M)$ is not empty, then the module $M$ is not uniform and
  hence  $|\mV(\mN(M))|\geq 2$.

Henceforth we will assume that all considered modules $M$ have at least two proper
 submodules, i.e. $|\mV(\mS(M))|\geq 2$. Notice that if $N$ is a nontrivial nonessential submodule
 of $M$  then $N$, as a vertex of     $\mS (M)$  is adjacent to
  its complement  and every vertex which is essential as a submodule of $M$
   is a universal vertex of $\mS (M)$. Hence $\delta(\mS (M))\geq \delta(\mN(M))\geq 1$.
   Moreover, if $M$ contains a proper essential submodule  (i.e. $M$ is not a semisimple module),
    then  $\Delta(\mS (M))=|\mV(\mS(M))|-1$. The above remarks will be used freely in the text.
    Using the above it is easy to see that:
\begin{prop} Suppose the module  $M$ is not simple. Then  $M$ is semisimple if and only if
 $\mS(M)=\mN(M)$   if and only if there exists a vertex $X$ of $\mN (M)$, such that
  $\deg_{\mS (M)}(X)=\deg_{\mN (M)}(X)$.

\end{prop}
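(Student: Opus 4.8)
The plan is to prove the cycle of implications ``$M$ is semisimple'' $\Rightarrow$ ``$\mS(M)=\mN(M)$'' $\Rightarrow$ ``there is a vertex $X$ of $\mN(M)$ with $\deg_{\mS(M)}(X)=\deg_{\mN(M)}(X)$'' $\Rightarrow$ ``$M$ is semisimple'', which closes the loop and yields all three equivalences. Throughout I would rely on two facts already available: the standard characterization that $M$ is semisimple if and only if $M$ has no proper essential submodule (exactly the parenthetical equivalence invoked above in the text), and the observation recorded in the preliminary remarks that every submodule which is essential in $M$ is a universal vertex of $\mS(M)$.

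For the first implication, if $M$ is semisimple then its only essential submodule is $M$ itself, so no nontrivial submodule is essential. Hence the vertex sets of $\mS(M)$ and $\mN(M)$ coincide, and since $\mN(M)$ is the subgraph of $\mS(M)$ induced on the nonessential submodules, taking the induced subgraph on the full vertex set returns the whole graph; thus $\mS(M)=\mN(M)$.

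For the second implication, equality of the two graphs gives $\deg_{\mS(M)}(X)=\deg_{\mN(M)}(X)$ for \emph{every} vertex $X$. Because we assume $|\mV(\mS(M))|\ge 2$, the graph $\mN(M)=\mS(M)$ is nonempty, so any one of its vertices serves as the required witness $X$.

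The crux is the third implication, which I would argue by contraposition. Suppose $M$ is not semisimple, so it admits a proper essential submodule $Y$. Then $Y$ is a vertex of $\mS(M)$ (nonzero because it is essential in the nonzero module $M$, and proper by assumption) but $Y$ is not a vertex of $\mN(M)$, and being essential it is a universal vertex of $\mS(M)$. Now pick any vertex $X$ of $\mN(M)$; since $X$ is nonessential while $Y$ is essential we have $X\ne Y$, and universality gives $X\backsim Y$ in $\mS(M)$. Thus $Y$ is counted in $\deg_{\mS(M)}(X)$ but, lying outside $\mN(M)$, is not counted in $\deg_{\mN(M)}(X)$, whence $\deg_{\mS(M)}(X)>\deg_{\mN(M)}(X)$ for every vertex $X$ of $\mN(M)$, contradicting the third condition. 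The main obstacle is precisely this direction: one must confirm that the hypothetical essential submodule $Y$ is genuinely a distinct vertex adjacent to the chosen $X$ --- distinctness follows from $X$ being nonessential and $Y$ essential, and adjacency is exactly the universality of essential vertices --- so that a single such $Y$ already forces a strict drop between the two degrees.
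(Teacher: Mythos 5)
Your cycle of implications is precisely the argument the paper intends: the paper gives no written proof beyond ``using the above it is easy to see,'' where ``the above'' consists exactly of the two facts you invoke (every essential submodule is a universal vertex of $\mS(M)$, and $M$ is semisimple iff it has no proper essential submodule). Your first two implications are correct as stated, and your identification of the third implication as the crux is right.

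However, the closing step of your contrapositive has a genuine gap --- one it shares with the paper's own statement. From the fact that the $\mS(M)$-neighbourhood of $X$ properly contains its $\mN(M)$-neighbourhood (the extra element being the essential vertex $Y$), you conclude $\deg_{\mS(M)}(X)>\deg_{\mN(M)}(X)$. But degree is defined as the cardinality of the neighbour set, and adjoining one element to an infinite set does not change its cardinality, so this inference is valid only when $\deg_{\mN(M)}(X)$ is finite. The gap cannot be repaired in general, because under this reading the implication is false: take the paper's own later example $M=\Z\oplus\Z_2$ over $R=\Z$. The vertex $X=\Z(1,0)$ is not essential, it is adjacent in $\mN(M)$ to the infinitely many pairwise distinct nonessential vertices $\Z(k,1)$, $k\geq 0$ (the paper itself records $\deg_{\mN(M)}(\Z(1,0))=\infty$), and $M$, being Noetherian and countable, has only countably many submodules; hence $\deg_{\mS(M)}(X)=\aleph_0=\deg_{\mN(M)}(X)$, although $M$ is not semisimple. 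So the third condition must either be restricted to vertices of finite degree or be read as equality of the two neighbour \emph{sets} of $X$ (equivalently: no neighbour of $X$ in $\mS(M)$ is essential in $M$); under either amendment your argument --- in particular the key observation that a single essential vertex $Y$ is a new neighbour of every $X\in\mV(\mN(M))$ --- goes through verbatim. The defect is inherited from the proposition as printed, not introduced by you, but as written the displayed strict inequality is the step that fails.
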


\begin{example}\label{ex1}  Let $M=\bigoplus_{i\in T}  B_i$ be a
direct sum of pairwise non isomorphic simple $R$-modules, where $T$ is a
finite set with $|T|=n\geq 2$.  Then submodules of $M$ are in one to one
correspondence with subsets of $T$.  Let $A=\bigoplus_{i\in S}B_i$ and
 $B=\bigoplus_{i\in S'}B_i$   be a proper submodules $M$ such that $A+B=M$.
 Then $S'=(T\setminus S)\cup F$, where $F$ is a subset of $S$ different from $S$.
 This shows that  $\deg_{\mS(M)}(A)=2^{|S|}-1 $. In particular, maximal submodules correspond to
   vertices of $\mS(M)$ of degree   $ 2^{n-1}-1=\Delta(\mS(M)) $ and minimal submodules to vertices
   of degree $1=\delta(\mS(M))$.
\end{example}

Notice that if $X, A\in \mV(\mN (M))$  and  $X \subseteq A$, then $\deg_{\mN(M)}(X) \leq \deg_{\mN(M)}(A)$. The following example shows that this inequality can be sharp even when $A$ is a uniform submodule of $M$.
\begin{example} Consider the $\Z$-module
  $M=\mathbb{Z}_{8}\oplus\mathbb{Z}_2$ and set $A=\Z(1,0)=\Z_8$ and $X=4\Z_8\subseteq A$.
   Then $A$ is uniform and  one can check that $\deg_{\mathcal{P}_\mathbb{Z}(M)}(X)=2$ and $   \deg_{\mathcal{P}_\mathbb{Z}(M)}(A)=3$.
     \end{example}
     \begin{example}\label{basic ex}
 Let $M=B_1\oplus B_2$, where $B_i$'s are  simple $R$-modules. Then:\\
 \indent(i) $|\mV(\mS(M)|=|Hom_{R}(B_1,B_2)|+1$ \\
 \indent(ii) $\mS(M)=\mN(M)$ is a complete graph of cardinality $|Hom_{R}(B_1,B_2)|+1$. \\
 Indeed (i) is a direct consequence of \cite[Lemma 1.1]{red} (or can be easily seen directly)  and (ii) is a consequence of (i).
\end{example}
Remark that Examples \ref{ex1}  and \ref{basic ex} apply when $_RM={_R}R$ and $R$ is a finite product of   division rings and $R$ is a two by two matrix ring over a division ring, respectively.

We close this section with the following observation:
\begin{theorem}\label{a6} $\mS (M)$ and $\mN(M)$ are connected graphs of diameter not bigger than 3.
 \end{theorem}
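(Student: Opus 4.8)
The plan is to prove both assertions at once by working with complements, treating the essential vertices (which occur only in $\mS(M)$) separately. Recall from the remarks preceding the statement that a proper essential submodule $E$ is a universal vertex of $\mS(M)$: for every other submodule $X$ the sum $E+X\supseteq E$ is essential, so $E\backsim X$. Hence in $\mS(M)$ any pair of vertices in which at least one is essential is already joined by an edge, and such pairs cause no difficulty. Thus the whole content of the theorem reduces to showing that any two \emph{non-essential} vertices $A,B$ (these being the only vertices of $\mN(M)$) are joined by a walk of length at most $3$ through non-essential vertices, which then applies verbatim to both $\mN(M)$ and $\mS(M)$.

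First I would record the standard facts about complements (see \cite{la1,la2}): every submodule of $M$ admits a complement, and if $A$ is a nontrivial non-essential submodule then any complement $A'$ of $A$ is again nontrivial and non-essential, with $A\oplus A'$ essential in $M$. In particular $A'\in\mV(\mN(M))$ and $A\backsim A'$, and symmetrically a complement $B'$ of $B$ satisfies $B'\in\mV(\mN(M))$ and $B\backsim B'$. The heart of the argument is then a dichotomy on the single submodule $A'+B'$. If $A'+B'$ is \emph{not} essential, then $X:=A'+B'$ is itself a non-essential vertex, and since $A+X\supseteq A\oplus A'$ and $B+X\supseteq B\oplus B'$ are both essential we obtain the walk $A\backsim X\backsim B$ of length $2$. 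If instead $A'+B'$ \emph{is} essential, then $A'\backsim B'$ (note $A'\ne B'$, since an equality would make the non-essential submodule $A'$ essential), giving the walk $A\backsim A'\backsim B'\backsim B$ of length $3$. In either case $d(A,B)\le 3$, so $\mN(M)$ is connected with $\di(\mN(M))\le 3$; combined with the universality of essential vertices this yields the same bound for $\mS(M)$.

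I expect the only delicate points to be bookkeeping rather than any hard module theory: checking that the chosen complements are genuine vertices (nonzero, proper, and non-essential) and that the adjacencies invoked are between distinct vertices. The essential structural observation that drives everything is that $A'+B'$ cannot fail to be essential without \emph{containing} both complements $A'$ and $B'$, which is exactly what forces the short common neighbour in the non-essential case; this clean either/or is what keeps the diameter down to $3$.
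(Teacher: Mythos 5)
Your proof is correct, and it takes a genuinely different route from the paper's. The paper starts from a non-adjacent pair $L\neq K$ (non-adjacency already forces both vertices to be non-essential, so both graphs are handled at once) and complements the \emph{derived} submodules $L+K$ and $L\cap K$ rather than the vertices themselves: with $C$ a complement of $L+K$ and $D$ a complement of $L\cap K$, it splits on whether $L\cap K=0$. If $L\cap K\neq 0$, then $D$ is a common neighbour, giving the length-two path $L\backsim D\backsim K$ (here $L\cap K$ is non-essential because $L+K$ is not, so $D\neq 0$); if $L\cap K=0$, it uses the length-three path $L\backsim K\oplus C\backsim L\oplus C\backsim K$. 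Your dichotomy --- on whether $A'+B'$ is essential, where $A'$ and $B'$ are complements of the vertices $A$ and $B$ themselves --- produces different intermediate vertices and is equally valid. Comparing what each buys: the paper's split yields the sharper intermediate fact that non-adjacent vertices with nonzero intersection are always at distance $2$, whereas your argument never needs a non-adjacency hypothesis and makes the reduction of $\mS(M)$ to $\mN(M)$ explicit via universality of essential vertices (the paper achieves the same effect by checking that all intermediate vertices in its paths are non-essential, e.g.\ $(K\oplus C)\cap L=0$ when $L\cap K=0$). The bookkeeping you deferred does close: $X=A'+B'$ is automatically proper (it is not essential), it is distinct from $A$ since $A'\subseteq X$, $A'\cap A=0$ and $A'\neq 0$ (similarly for $B$), and a walk of length at most $3$ bounds the distance even when it is not a path.
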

 \begin{proof} Suppose there are nontrivial submodules  $L \ne K$ such that $L$ and $K$ are not adjacent. Let $C, D$ denote    complements to  $L+ K$  and $L\cap K$ in $M$, respectively.

  If $L\cap K=0$, then
    we have the path $L\backsim K\oplus C \backsim L \oplus C \backsim K$.

  If $L\cap K\neq 0$, then we have the path $L\backsim D \backsim K$. Notice  all  submodules appearing  above are not essential, so the thesis follows.
 \end{proof}
  \section{On vertices of finite degree }
 The aim of this section is to characterize modules  $M$ such that all vertices of the proper sum-essential $\mN(M)$     are of finite degree and to determine submodules of degree one. Let us begin   the following theorem.

 \begin{theorem}\label{thm finite deg S(M)}
 For a module $M$ the following conditions are equivalent:
 \begin{enumerate}
   \item  $M$ has only finitely many submodules;
   \item  $\Delta(\mS (M))$ is finite;
   \item  Every vertex of $\mS (M)$ is of finite degree;
    \item Either   \emph{(i)} $M$ contains a proper essential submodule $Q$ of finite degree in $\mS(M)$ or
  \emph{(ii)} There exist simple submodules $S_1,\ldots,  S_n$ of $M$ such that $M=\bigoplus_{i=1}^nS_i$ and   $|Hom(S_i, S_j)|< \infty$,    for all $1\leq k< l\leq n$.
 \end{enumerate}
  \end{theorem}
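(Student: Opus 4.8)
The plan is to establish the cycle $(1)\Rightarrow(2)\Rightarrow(3)\Rightarrow(4)\Rightarrow(1)$. The first two implications are essentially free: if $M$ has only finitely many submodules then $\mS(M)$ has finitely many vertices, so every degree, and in particular $\Delta(\mS(M))$, is finite; and $(2)\Rightarrow(3)$ holds because $\deg_{\mS(M)}(v)\le\Delta(\mS(M))$ for each vertex $v$. For $(3)\Rightarrow(4)$ I would split according to whether $M$ is semisimple. If $M$ is \emph{not} semisimple it contains a proper essential submodule $Q$, and by the preliminary remarks $Q$ is a universal vertex of $\mS(M)$; under $(3)$ its degree is finite, so alternative (i) of $(4)$ holds.

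It then remains to treat the semisimple case of $(3)\Rightarrow(4)$, where $\mS(M)=\mN(M)$, there is no proper essential submodule, and adjacency of $A,B$ means simply $A+B=M$. The goal is to force $M$ to be a finite direct sum of simple modules with finite pairwise Hom-sets, i.e.\ alternative (ii); I would argue by contraposition, exhibiting a vertex of infinite degree whenever (ii) fails. Two constructions cover the two ways (ii) can fail for a semisimple module. First, if $\ud(M)=\infty$, write $M=S_1\oplus A$ with $S_1$ simple and $A$ semisimple of infinite length; the maximal submodule $A$ is then adjacent to every submodule $A_0\oplus S_1$ with $A_0\subsetneq A$, and since $(A_0\oplus S_1)\cap A=A_0$ these are pairwise distinct, while $A$ has infinitely many submodules, so $\deg(A)=\infty$. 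Second, if $\ud(M)=n<\infty$ but two summands satisfy $S_i\cong S_j$ with $D=\mathrm{End}_R(S_i)$ infinite, take a complement $A$ of $S_i$ (a maximal submodule containing $S_j$); the graphs $T_f=\{x+f(x):x\in S_i\}$ of the homomorphisms $f\colon S_i\to S_j\subseteq A$ yield $|\mathrm{Hom}_R(S_i,S_j)|=|D|=\infty$ distinct simple submodules not contained in $A$, each adjacent to $A$, whence again $\deg(A)=\infty$. As any semisimple module failing (ii) falls into one of these cases, $(3)$ delivers (ii).

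For $(4)\Rightarrow(1)$ I would again split on the two alternatives. Under (i), the proper essential submodule $Q$ is a universal vertex, so $\deg_{\mS(M)}(Q)=|\mV(\mS(M))|-1$; finiteness of this degree forces finitely many vertices, hence finitely many submodules. Under (ii), I would exploit that every submodule $N$ of a semisimple module decomposes along isotypic components as $N=\bigoplus_S (N\cap M_S)$, so the submodules of $M$ are counted by the product of the numbers of submodules of the finitely many isotypic components $M_S\cong S^{m_S}$. The submodules of $S^{m}$ correspond to the $D$-subspaces of $D^{m}$ with $D=\mathrm{End}_R(S)$: when $m=1$ there are only two, and when $m\ge2$ there are two isomorphic summands, so (ii) guarantees $|D|<\infty$ and thus $D^m$ is finite with finitely many subspaces. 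Hence $M$ has finitely many submodules.

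The main obstacle is the semisimple direction $(3)\Rightarrow(4)$. The delicate points are verifying that the two displayed constructions genuinely exhaust the ways (ii) can fail, and checking that the exhibited submodules are pairwise distinct, nontrivial, proper, and legitimately adjacent to the chosen maximal submodule. The bookkeeping with the graphs $T_f$ of homomorphisms (for the infinite endomorphism ring case) and with the isotypic decomposition $N=\bigoplus_S(N\cap M_S)$ (for the counting in $(4)\Rightarrow(1)$) is where the care is required; the remaining implications reduce, via the preliminary remarks on universal vertices and on $\Delta(\mS(M))=|\mV(\mS(M))|-1$, to routine observations.
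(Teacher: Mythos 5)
Your proof is correct, and its skeleton --- the cycle $(1)\Rightarrow(2)\Rightarrow(3)\Rightarrow(4)\Rightarrow(1)$ with the split into the non-semisimple case (giving (4)(i)) and the semisimple case (giving (4)(ii)) --- matches the paper's. The differences are in execution. For $(3)\Rightarrow(4)$ in the semisimple case the paper argues directly rather than by contraposition: fixing $M=\bigoplus_{i\in I}S_i$, it observes that $\bigoplus_{k\ne i\in I}S_i$ is adjacent to $S_k\oplus\bigoplus_{i\in J}S_i$ for every proper $J\subseteq I\setminus\{k\}$ (forcing $I$ finite), and then that $N=\bigoplus_{i\ne l}S_i$ is adjacent to every nonzero submodule $B\ne S_k$ of $S_k\oplus S_l$, so finiteness of $\deg_{\mS(M)}(N)$ together with Example \ref{basic ex} forces $|Hom(S_k,S_l)|<\infty$. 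Your contrapositive version builds exactly the same adjacencies (your vertices $A_0\oplus S_1$ and your graphs $T_f$ are the paper's adjacent vertices, with Example \ref{basic ex} unwound by hand), so that part is the same argument in mirror image. The genuine divergence is in $(4)\Rightarrow(1)$ under (ii): the paper first reduces to showing $M$ has finitely many \emph{simple} submodules, then bounds the submodules isomorphic to a fixed $S_k$ by noting that each lies in the partial sum $V$ of the coordinates onto which it projects nontrivially, and that $End(V)$ is finite; you instead invoke the isotypic decomposition $N=\bigoplus_S(N\cap M_S)$ and the lattice correspondence between submodules of $S^m$ and $D$-subspaces of $D^m$, $D=End_R(S)$. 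Both are standard facts about semisimple modules; your route counts all submodules at once and is arguably cleaner, at the price of citing the subspace-lattice correspondence, whereas the paper's count is more elementary (it only needs finiteness of Hom- and End-sets) but requires the intermediate reduction to simple submodules. One small point you should make explicit in your second construction: the complement of $S_i$ must be chosen as $\bigoplus_{k\ne i}S_k$ so that it really does contain $S_j$, as your parenthetical suggests but does not verify.
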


 \begin{proof} Implications $(1)\Rightarrow (2)\Rightarrow(3)$ are  trivial.

 $(3)\Rightarrow(4)$. Suppose $M$ does not contain a proper essential submodule, i.e. $M$ is semisimple.     Let   $M=\bigoplus_{i\in I} S_i $ be a direct sum   of simple modules $S_i$, where $i$ ranges over an index set $I$. Pick $k\in I$.  Since, for any proper subset $J$ of $I\setminus\{k\}$,  $\bigoplus_{k\ne i\in I}S_i$ is adjacent to
 $S_k\oplus \bigoplus_{i\in J}S_i$, the set $I\setminus\{k\}$ has finitely many subsets, so $I$ is finite, say $I=\{1,\ldots, n\}$.

  Let us fix   $1\le k<l\le n$  and set $N=\bigoplus_{i\ne l}^nS_i$. Then, for any nonzero submodule $B$ of $S_k\oplus S_l$ different from $S_k$, we have $B+N=M$. Since  $\deg_{\mS(M)}(N)$ is finite,   Example \ref{basic ex} shows that  $|Hom(S_k, S_l)|< \infty$ for all $1\leq k<l\leq n$, i.e. (4)(ii) holds.

  $(4)\Rightarrow(1)$. If $M$ is as in (4)(i), then $M$ has finitely many submodules. Suppose (4)(ii) holds and  $M=\bigoplus_{i=1}^nS_i$.   Since $M$ is semisimple, it is enough to see that $M$ has only finitely many simple submodules.  Let $\mathcal{S}_1$ denote the set of all submodules isomorphic to $S_1$  and  $\pi_{S,i}\colon S\rightarrow S_i$, where $S\in \mathcal{S}_1$ and $1\leq i\leq n$. Set $I=\{i\mid \pi_{S,i} \ne 0\}$. If $|I|=1$, then $S=S_1$. Suppose $|I|>1$. Then $S_i\simeq S_j$, for $i,j\in I$ and any $S\in \mathcal{S}_1$  is a submodule of $V=\bigoplus_{i\in I}S_i$. By (4)(ii), $|End(S_i)|< \infty$ so also $End(V)$ is finite. This, in particular implies that $\mathcal{S}_1$ is finite. Replacing $S_1$ by any $S_k$, $1\le k\le n$, in the above consideration we see  that $M$ has finitely many simple submodules and (1) follows.
  \end{proof}
  For obtaining a similar result for the proper sum-essential graph we will need the following lemma:
\begin{lem}\label{lem finite deg P}
Suppose that  $\deg_{\mN(M)}(A)<\infty $, for any   $A\in \mV(\mN (M))$. Then $soc(M)\ess M$ and  $soc(M)$ contains only  finitely many   submodules.
\end{lem}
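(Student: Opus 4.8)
The plan is to prove both assertions by contraposition: if either fails, I will exhibit a vertex of $\mN(M)$ of infinite degree. I may assume $M$ is not uniform (otherwise $\mN(M)=\varnothing$). The recurring tool is the elementary observation that if $C$ is a complement of a submodule $K$ in $M$, then for $B\subseteq K$ one has $C+B=C\oplus B\ess M$ whenever $B\ess K$ (using $C\cap K=0$ and that essentiality is preserved under direct sums and is transitive). I also isolate an auxiliary fact: \emph{a nonzero module $A$ with $soc(A)=0$ has infinitely many essential submodules}. Indeed, if $A$ had no proper essential submodule, then completing any submodule $L$ by a complement $L'$ would give $L\oplus L'\ess A$, hence $L\oplus L'=A$, so every submodule of $A$ would be a direct summand; then $A$ is semisimple and $A=soc(A)=0$, a contradiction. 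Thus $A$ has a proper essential submodule, and iterating (each term of the resulting chain again has zero socle) yields an infinite strictly descending chain of essential submodules.

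To prove $soc(M)\ess M$, suppose not and first secure a \emph{nonessential} $K\neq 0$ with $soc(K)=0$. Any nonzero $L$ with $L\cap soc(M)=0$ has $soc(L)=0$; if $L$ is nonessential put $K=L$, while if $L$ is essential then $soc(M)=0$ (a simple submodule of $M$ would meet $L$), and the non-uniformity of $M$ furnishes nonzero $P,Q$ with $P\cap Q=0$, so I put $K=P$. Let $C\neq 0$ be a complement of $K$. By the auxiliary fact $K$ has infinitely many essential submodules $B$; each satisfies $C+B\ess M$, is nonessential in $M$ (since $B\cap C=0$ and $C\neq 0$), and differs from $C$. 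Hence $\deg_{\mN(M)}(C)=\infty$, a contradiction, so $soc(M)\ess M$.

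Now write $soc(M)=\bigoplus_{i\in I}T_i$ with $T_i$ simple; since $M$ is not uniform and $soc(M)\ess M$, we have $\ud(M)=|I|\geq 2$. If $I$ were infinite, fix $k\in I$ and consider $A=\bigoplus_{i\neq k}T_i$: for every proper subset $J\subsetneq I\setminus\{k\}$ the submodule $B_J=T_k\oplus\bigoplus_{i\in J}T_i$ satisfies $A+B_J=soc(M)\ess M$ and is nonessential (its complement in $soc(M)$ is nonzero), and distinct $J$ give distinct neighbours, forcing $\deg_{\mN(M)}(A)=\infty$. Hence $I=\{1,\dots,n\}$ is finite. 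To bound each $|Hom(T_i,T_j)|$ with $i\neq j$, set $N=\bigoplus_{l\neq j}T_l$: for every simple submodule $B$ of $T_i\oplus T_j$ other than $T_i$ we have $T_i+B=T_i\oplus T_j$, hence $N+B=soc(M)\ess M$, while $B$ is simple and nonessential. By Example \ref{basic ex} there are $|Hom(T_i,T_j)|+1$ such submodules $B$, so the finiteness of $\deg_{\mN(M)}(N)$ forces $|Hom(T_i,T_j)|<\infty$. Finally, since $soc(M)=\bigoplus_{i=1}^nT_i$ is semisimple with all these $Hom$-groups finite, the implication $(4)\Rightarrow(1)$ of Theorem \ref{thm finite deg S(M)}, applied to $soc(M)$, shows that $soc(M)$ has only finitely many submodules.

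I expect the delicate point to be the first part: one must arrange simultaneously that $K$ is \emph{nonessential} (so its complement $C$ is a bona fide nonzero vertex) and that $soc(K)=0$ (so $K$ supplies infinitely many essential submodules via the semisimplicity dichotomy), and the non-uniform reduction is precisely what rescues the degenerate case $soc(M)=0$. By contrast, once $soc(M)\ess M$ is in hand the second part is a routine counting argument powered by Example \ref{basic ex} and Theorem \ref{thm finite deg S(M)}.
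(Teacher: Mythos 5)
Your proof is correct, but the route to $soc(M)\ess M$ is genuinely different from the paper's. The paper first shows $\ud(M)<\infty$ (an infinite independent family with $\bigoplus_{i}A_i\ess M$ would make $\bigoplus_{i\geq 2}A_i$ adjacent to infinitely many vertices $A_1\oplus A_k$), then shows that every nonessential uniform submodule $U$ has only finitely many submodules --- its complement $C$ is adjacent in $\mN(M)$ to every nonzero $A\subseteq U$ --- so every uniform submodule contains a simple one, and $soc(M)\ess M$ follows because finite uniform dimension forces every nonzero submodule to contain a uniform one. You instead argue by contraposition through the auxiliary fact that a nonzero module with zero socle has infinitely many essential submodules (via ``every submodule is a direct summand $\Rightarrow$ semisimple'' and iteration of proper essential submodules), then feed the infinitely many essential submodules of a nonessential zero-socle $K$ to its complement $C$; this bypasses uniform dimension altogether and is a clean, self-contained alternative, though it yields less by-product structure (the paper's intermediate facts about $\ud(M)$ and uniform submodules are of independent interest). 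Both proofs share the same adjacency trick ($B\ess K$ gives $B\oplus C\ess M$) and the same implicit hypothesis $\mN(M)\neq\emptyset$ (the paper states it, you assume $M$ not uniform), which is genuinely needed since the statement fails vacuously for $M={}_{\Z}\Z$. For the second assertion the paper is quicker: once $soc(M)\ess M$, adjacency in $\mS(soc(M))=\mN(soc(M))$ lifts to adjacency in $\mN(M)$, so every vertex of $\mS(soc(M))$ has finite degree and implication $(3)\Rightarrow(1)$ of Theorem \ref{thm finite deg S(M)} finishes; you instead re-derive condition (4)(ii) of that theorem by hand inside $\mN(M)$ (finiteness of the index set and of the $Hom$-sets) and invoke $(4)\Rightarrow(1)$, which is correct but duplicates the internal $(3)\Rightarrow(4)$ argument of Theorem \ref{thm finite deg S(M)} that the paper's reduction gets for free. (One cosmetic slip: the number of simple submodules of $T_i\oplus T_j$ other than $T_i$ is $|Hom(T_i,T_j)|$, not $|Hom(T_i,T_j)|+1$; this does not affect your conclusion.)
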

\begin{proof}
  Suppose $\mN (M)$ is not an empty graph. Let us observe first
   that $\ud(M)$ is finite. Indeed in case $M$ would contain nonzero
    submodules $A_i$ such that  $\bigoplus_{i=1}^\infty A_i\ess M$,
     then $N=\bigoplus_{i=2}^\infty A_i$ would be  adjacent to infinitely
     many vertices $A_1\oplus A_k$, for $k\in \N$.  This is impossible as
      $\deg N<\infty$.    Let $U$ be a uniform submodule of $M$ and $C$ a
       complement to $U$ in $M$. Then, for any nonzero submodule $A$ of $U$,
        $A\oplus C\ess M$, as $A\ess U$. This shows that every nonzero uniform
        submodule contains only finitely many submodules, in particular,  it contains
        a simple submodule. Hence $soc(M)\ess M $, as $\ud(M)<\infty$.  This means that
         every vertex of the graph $\mN(soc(M))=\mS(soc(M))$ is of finite degree, as $\mN (M)$
         has this property. Now, applying Theorem \ref{thm finite deg S(M)} to $\mS(soc(M))$ we
          get  $|\mN(soc(M))|<\infty$, i.e. the thesis holds.
\end{proof}
\begin{theorem}\label{thm finite deg P(M)}
For a module $M$ the following conditions are equivalent:
 \begin{enumerate}
   \item Every vertex of $\mN (M)$ is of finite degree;
   \item The graph $\mN(M)$ is finite.
 \end{enumerate}
\end{theorem}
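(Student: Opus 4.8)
The implication $(2)\Rightarrow(1)$ is immediate, since in a finite graph every vertex has degree at most $|\mV(\mN(M))|-1$. The substance is $(1)\Rightarrow(2)$, and the plan is to deduce it from Lemma \ref{lem finite deg P} together with a counting argument on socles. Assuming (1), Lemma \ref{lem finite deg P} supplies $soc(M)\ess M$ and the fact that $soc(M)$ has only finitely many submodules. The first thing I would record is a clean description of $\mV(\mN(M))$ in these terms. Since $soc(M)\ess M$, every nonzero submodule $N$ satisfies $soc(N)=N\cap soc(M)\ne 0$; and because $soc(M)$ is semisimple, its only essential submodule is itself. Combining these I would show that $N\ess M$ if and only if $soc(M)\subseteq N$, equivalently $soc(N)=soc(M)$. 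Hence the vertices of $\mN(M)$ are exactly the nonzero submodules $N$ for which $soc(N)=N\cap soc(M)$ is a \emph{proper} submodule of $soc(M)$.

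The key step is then to bound, for each fixed nonzero proper submodule $T$ of $soc(M)$, the number of vertices $N$ with $soc(N)=T$. Writing $soc(M)=T\oplus W$ (possible since $soc(M)$ is semisimple) with $0\ne W\ne soc(M)$, I claim every such $N$ is adjacent in $\mN(M)$ to the single fixed vertex $W$. Indeed $W$ is itself a vertex, being nonessential as $soc(W)=W\ne soc(M)$, and $N+W\supseteq soc(N)+W=T+W=soc(M)\ess M$, so $N+W\ess M$; thus $N\backsim W$ whenever $N\ne W$. Consequently all these $N$ lie in the closed neighbourhood of $W$, and since $\deg_{\mN(M)}(W)<\infty$ by (1), there are only finitely many of them.

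Finally I would assemble the pieces: every vertex $N$ of $\mN(M)$ has $soc(N)=N\cap soc(M)$ equal to one of the finitely many submodules of $soc(M)$, the value $0$ being excluded because $soc(M)\ess M$ forces $soc(N)=0\Rightarrow N=0$. Ranging over the finitely many nonzero proper submodules $T$ of $soc(M)$ and using the bound of the previous paragraph for each, we conclude that $\mV(\mN(M))$ is a finite union of finite sets, hence finite.

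The main obstacle, and the point the plan is designed to circumvent, is that Lemma \ref{lem finite deg P} is not by itself sufficient: a module with $soc(M)\ess M$ and $soc(M)$ having finitely many submodules may still possess infinitely many nonessential submodules (for instance $M=\mathbb{Z}_{p^{\infty}}\oplus\mathbb{Z}_{p^{\infty}}$, whose infinitely many uniform submodules are all nonessential). What rescues the argument is locating, for each admissible socle $T$, a single witness vertex $W$ to which every submodule with that socle is forced to be adjacent, so that the finite-degree hypothesis of (1) can be brought to bear. Verifying this adjacency through the containment $soc(M)=T+W\subseteq N+W$ is the technical heart of the proof; the remainder is bookkeeping over the finitely many submodules of $soc(M)$.
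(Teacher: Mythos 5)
Your proof is correct and takes essentially the same approach as the paper: both reduce to Lemma \ref{lem finite deg P}, partition the vertices of $\mN(M)$ by their intersection with $soc(M)$, and show each class $\{N \mid N\cap soc(M)=T\}$ is finite by exhibiting a single fixed vertex of finite degree adjacent to every member of the class. The only cosmetic difference is the choice of witness: you take a complement $W$ of $T$ inside $soc(M)$ (so that $N+W\supseteq T+W=soc(M)\ess M$), while the paper takes a complement $C_P$ of $P=T$ in $M$ (so that $N+C_P\supseteq P\oplus C_P\ess M$); both computations serve the identical purpose.
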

\begin{proof}
 $(1)\Rightarrow(2)$ In virtue of Lemma \ref{lem finite deg P}, it is enough
 to show that for every proper  submodule $P$ of $soc(M)$ the set
  $\mathcal{A}_P=\{N<M\mid N\cap soc(M)=P\}$ is finite. Let    $C_P$ be a complement to $P$ in $M$.
  Then $C_P\ne 0$ as $P$ is a proper submodule of $soc(M)$. Moreover $C_P$
    is adjacent to any $N\in \mathcal{A}_P$ and $|\mathcal{A}_P|<\infty$ follows, as $\deg_{\mN(M)}(C_P)<\infty$.

 The implication $(2)\Rightarrow(1)$ is clear.
\end{proof}

\begin{lem} \label{lemma deg1} Let $B  \in \mV(\mS (M))$.
 If $\deg_{\mS (M)}(B)=1$, then either  $B$ is a simple module or  there exists a simple submodule $A\subset  B$ such that  $\mV(\mS (M))=\{A,B\}$.
  \end{lem}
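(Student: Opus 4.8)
The plan is to split the argument according to whether $B$ is essential in $M$, since these two situations behave quite differently.

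First suppose $B \ess M$. As recalled in the preliminaries, an essential submodule is a universal vertex of $\mS(M)$, so $B$ is adjacent to every other vertex; hence $\deg_{\mS(M)}(B)=1$ forces $\mS(M)$ to have exactly two vertices, say $\mV(\mS(M))=\{A,B\}$, and the nontrivial submodules of $M$ are precisely $A$ and $B$. I would first rule out that $A$ and $B$ are incomparable: if they were, then $A\cap B$ and $A+B$, being submodules of $M$, would lie in $\{0,A,B,M\}$, and incomparability would force $A\cap B=0$ and $A+B=M$, i.e. $M=A\oplus B$; but $A\cap B=0$ with $A\neq 0$ contradicts $B\ess M$. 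So $A$ and $B$ are comparable. If $B\subsetneq A$, then the only submodules contained in $B$ are $0$ and $B$, so $B$ is simple, which is the first alternative. If $A\subsetneq B$, then likewise $A$ is simple, $A\subset B$ is proper, and $\mV(\mS(M))=\{A,B\}$, the second alternative.

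Now suppose $B$ is not essential. Then $B$ admits a nonzero complement $C$ in $M$, and by a standard property of complements $B\oplus C\ess M$; moreover $C$ is a genuine vertex (it is nonzero, and $C=M$ would force $B=0$) with $C\neq B$, so $B\backsim C$. The crucial observation is that $C$ is then forced to be a maximal submodule of $M$: for any submodule $D$ with $C\subseteq D\subsetneq M$ one has $B+D\supseteq B+C\ess M$, hence $B+D\ess M$, and $D$ is a vertex distinct from $B$ (indeed $D=B$ would give $C\subseteq B$, whence $C=B\cap C=0$). Thus every such $D$ is adjacent to $B$, and since $\deg_{\mS(M)}(B)=1$ with $B\backsim C$, each such $D$ must coincide with $C$. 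Hence there is no submodule strictly between $C$ and $M$, i.e. $C$ is maximal. Since $B\not\subseteq C$, maximality yields $B+C=M$, so $M=B\oplus C$ and $B\cong M/C$ is simple, again the first alternative.

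I expect the main obstacle to be the non-essential case, specifically the step showing that the complement $C$ must be maximal: the key point is that enlarging $C$ toward $M$ never destroys adjacency to $B$, because $B+C$ is already essential, so the degree-one hypothesis collapses the whole interval between $C$ and $M$ to a single point and pins down the splitting $M=B\oplus C$. The essential case is comparatively routine, amounting to a finite check on the four-element submodule lattice $\{0,A,B,M\}$.
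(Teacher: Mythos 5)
Your proof is correct, and its skeleton matches the paper's: the same case split on whether $B\ess M$, the same use of a complement $C$ in the non-essential case, and the same mechanism that enlarging $C$ cannot destroy adjacency to $B$, so the degree-one hypothesis collapses all such enlargements to $C$ itself. (Your four-element lattice check in the essential case is exactly the content the paper compresses into ``clearly the second case holds''.) The one genuine divergence is how the non-essential case is finished. The paper argues by contradiction: assuming $B$ is not simple, it fixes a nonzero proper submodule $A\subsetneq B$, compares the two neighbours $C$ and $A\oplus C$ of $B$ to force $A\oplus C=M$, and then the modular law gives $B=A\oplus(B\cap C)=A$, a contradiction. You instead compare $C$ with every submodule $D$ satisfying $C\subseteq D\subsetneq M$, conclude that $C$ is a maximal submodule of $M$, and deduce $M=B\oplus C$ with $B\cong M/C$ simple. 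Your route is direct rather than by contradiction and yields a little extra structural information for free (the unique neighbour of a non-essential degree-one vertex is a maximal submodule that is a direct summand, a fact consonant with the paper's later Remark \ref{rem form of complement}); the paper's route is shorter, needing only the single test submodule $A\oplus C$ and one application of modularity.
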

  \begin{proof} Suppose  $B$ is not simple and let $A$ be a nonzero proper submodule of $B$.   If $B$ is essential in $M$, then clearly the second case holds.

  Thus we may assume that   $B$ is not essential in $M$. Let $C$ be a complement to
$B$.  If $A\oplus C$ is a proper submodule of $M$, then $B$ is adjacent to $C$ and $A\oplus C$. This yields $A=0$, which is impossible. Thus $A\oplus C=M$. Then also $B\oplus C=M$ and $A=B$ follows, a contradiction. This shows that if  $B$ is not essential, then it is simple.
  \end{proof}

\begin{prop}\label{semideg1}
 For a submodule $B$ of a semisimple module  $M$  the following   conditions are equivalent:
\begin{enumerate}
  \item   $\deg_{\mS (M)}(B)=1$;
  \item  $B$ is simple and has  unique nonzero complement in $M$;
  \item  $B$ is simple,   $M$ is not simple and has no other
submodules  isomorphic to $B$.
\end{enumerate}
\end{prop}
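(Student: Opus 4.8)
The plan is to use the defining feature of a semisimple module: every submodule has a complement, so a submodule of $M$ is essential precisely when it equals $M$. Consequently two vertices $A,B$ of $\mS(M)$ are adjacent iff $A+B=M$. The cornerstone observation I would establish first is that when $B$ is \emph{simple}, the neighbours of $B$ in $\mS(M)$ are exactly the complements of $B$ in $M$ (all of which are nontrivial since $B$ is a vertex). Indeed, if $A+B=M$ then $A\cap B$ is a submodule of the simple module $B$, and $A\cap B=B$ would force $B\subseteq A$ and hence $A=A+B=M$, contradicting that $A$ is a vertex; thus $A\cap B=0$ and $A$ is a complement of $B$. Conversely any complement $C$ is a nontrivial submodule with $B+C=M$ and $C\neq B$, hence a neighbour. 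So for simple $B$ the degree $\deg_{\mS (M)}(B)$ equals the number of complements of $B$ in $M$.

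For $(1)\Rightarrow(2)$ I would invoke Lemma \ref{lemma deg1}: if $\deg_{\mS (M)}(B)=1$ then either $B$ is simple or $\mV(\mS (M))=\{A,B\}$ for some simple $A\subset B$. The second alternative is impossible for semisimple $M$, since then $B$ has a nonzero complement $C$, which would have to lie in $\{A,B\}$, yet $C=B$ gives $B=0$ and $C=A\subset B$ gives $A=A\cap B\subseteq B\cap C=0$. Hence $B$ is simple, and by the cornerstone observation $B$ has exactly one complement, which is nonzero; this is (2).

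The equivalence $(2)\Leftrightarrow(3)$ carries the real content and reduces to the statement: a simple $B$ has a unique complement in $M$ iff $M$ has no simple submodule isomorphic to $B$ other than $B$. I would prove this via the homogeneous (isotypic) decomposition $M=\bigoplus_T H_T$ together with the standard fact that every submodule respects it, i.e. $C=\bigoplus_T(C\cap H_T)$. If $B$ equals its whole homogeneous component $H$, any complement $C$ satisfies $C\cap H=C\cap B=0$, so $C\subseteq\bigoplus_{T\neq [B]}H_T=:K$; from $B\oplus C=M=B\oplus K$ with $C\subseteq K$ the modular law yields $C=K$, giving uniqueness. Conversely, if there is a simple $B'\cong B$ with $B'\neq B$, then $B\cap B'=0$, and inside $B\oplus B'$ both $B'$ and the ``diagonal'' submodules $D_\psi=\{b+\psi(b)\mid b\in B\}$ attached to isomorphisms $\psi\colon B\to B'$ are complements of $B$; extending each by a fixed complement of $B\oplus B'$ in $M$ produces at least two distinct complements, so uniqueness fails. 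Combined with the remark that $B$ being a proper nonzero submodule forces $M$ to be non-simple, this yields $(2)\Leftrightarrow(3)$.

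Finally $(3)\Rightarrow(1)$ falls out of the pieces above: under (3) the displayed statement gives $B$ a unique complement, and the cornerstone observation identifies the neighbours of $B$ with that single complement, so $\deg_{\mS (M)}(B)=1$. I expect the main obstacle to be the $(2)\Leftrightarrow(3)$ step, and specifically the careful bookkeeping via the isotypic decomposition needed to confirm that a second isomorphic copy of $B$ really manufactures a second complement, and that its absence pins the complement down to $K$.
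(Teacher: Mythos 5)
Your proof is correct, but on the key uniqueness direction it takes a genuinely different route from the paper. Both arguments start the same way: you and the paper invoke Lemma \ref{lemma deg1} to get simplicity of $B$ from $\deg_{\mS(M)}(B)=1$, and both use the diagonal-submodule trick $D_\psi=\{b+\psi(b)\mid b\in B\}$ to show that a second isomorphic copy of $B$ destroys uniqueness of the complement (the paper runs this as a direct contradiction -- both the copy and the diagonal land inside the unique complement $C$, forcing $B\subseteq C$ -- while you construct two visibly distinct complements $B'\oplus E$ and $D_\psi\oplus E$; these are minor variants of one idea). The real divergence is in proving $(3)\Rightarrow(2)$: the paper takes two complements $P,Q$, considers the projection $\pi\colon M=B\oplus Q\to B$, and argues that $\pi(P)\neq 0$ would split off a direct summand of $P$ isomorphic to $B$, contradicting (3); hence $P\subseteq Q=\ker\pi$ and maximality gives $P=Q$. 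You instead invoke the isotypic decomposition $M=\bigoplus_T H_T$ and the standard fact that every submodule decomposes along it, concluding that any complement of $B=H_{[B]}$ must equal $K=\bigoplus_{T\neq[B]}H_T$ by the modular law. Your approach uses heavier (though entirely standard) structure theory, but it buys an explicit identification of the unique complement as the sum of all simple submodules not isomorphic to $B$ -- which is exactly the description the paper only extracts later, in Remark \ref{rem form of complement}; the paper's projection argument is more elementary and self-contained, needing nothing beyond semisimplicity of submodules and maximality of complements. Both proofs are complete and correct; yours also reorganizes the logical cycle as $(1)\Rightarrow(2)$, $(2)\Leftrightarrow(3)$, $(3)\Rightarrow(1)$ via the observation that for simple $B$ the neighbours in $\mS(M)$ are precisely the complements of $B$, which is a clean reduction the paper leaves implicit.
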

\begin{proof} $(1)\Leftrightarrow (2)$ Suppose $\deg_{\mS (M)}(B)=1$. Then, by Lemma \ref{lemma deg1}, $B$ is simple. It is also clear that the complement to  $B$ in $M$ is unique. The reverse implication is obvious.

 $(2)\Rightarrow(3)$ Let $C$ be the unique complement to $B$.
  Let $D$ be any submodule of $M$ such that $B\cap D=0$ and $E$ a
   complement to $B\oplus D$ in $M$. Since $M$ is semisimple we
    get $B\oplus D\oplus E=M$ and $D\subseteq C$ follows as $C$ is unique.
     In case $D$ would be isomorphic to $B$ then both $D$ and the submodule
     $D'=\{a+f(a)\mid a\in B\}$, where $f\colon B\rightarrow D$ is the isomorphism,
     would be contained in $C$ and $B\subseteq D+D'\subseteq C$.  This is impossible,
      so  $M$ does not contain submodules isomorphic to $B$, i.e. the statement (3) holds.

 $(3)\Rightarrow(2)$ Suppose  $B$ is a   submodule of $M$ satisfying (3).  Let
  $P,Q\subseteq M$ be complements to $B$ in $M$ and let
  $\pi\colon   M=B\oplus Q \rightarrow B$ denote the projection onto $B$ along $Q$.
  If $\pi(P)\ne 0$,  then $\pi(P)=B$ as
$B$ is simple. Then, as $P$ is semisimple, there exists a submodule
$S\subseteq P$ such that  $P=(\ker \pi\cap P)\oplus S$. Thus $S$ is
isomorphic to $B$  and $B=S\subseteq P$ follows, which is
impossible.
 Hence  $\pi(P)=0$ and consequently  $P\subseteq Q=\ker \pi$ and
maximality of $P$ gives $P=Q$, i.e. (2) holds.
\end{proof}

As a direct application of the above proposition and Lemma \ref{lemma deg1} we obtain the
following corollaries:
\begin{cor}
 Let $R$ be a semisimple ring and $B$ a minimal left ideal of $R$. Then $\deg_{\mS(R)}(B)=1$
 if and only if $B$ is a two-sided ideal of $R$. In particular, every minimal left ideal of $R$
 is of degree 1 in $\mS(R)$  if and only if $R$ is a direct product of division rings.
\end{cor}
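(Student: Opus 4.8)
The plan is to reduce both assertions to Proposition~\ref{semideg1}, applied to the semisimple module $M={}_RR$, in which the minimal left ideals are precisely the simple submodules. By the equivalence $(1)\Leftrightarrow(3)$ of that proposition, a minimal left ideal $B$ satisfies $\deg_{\mS(R)}(B)=1$ if and only if $R$ contains no left ideal other than $B$ itself that is isomorphic to $B$ as a left $R$-module (the required hypothesis that ${}_RR$ is not simple being guaranteed by the standing assumption $|\mV(\mS(R))|\geq 2$). Hence everything comes down to the ring-theoretic equivalence: a minimal left ideal $B$ is two-sided if and only if it is the unique left ideal in its isomorphism class.

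To prove this I would use the decomposition $R=\bigoplus_j I_j$ of the semisimple ring into its minimal two-sided ideals, where each homogeneous component $I_j$ is the sum of all minimal left ideals in one isomorphism type $S_j$; recall that every left ideal isomorphic to $S_j$ lies inside $I_j$, and $I_j\cong S_j^{\ell_j}$ for a multiplicity $\ell_j$. The key step is to identify the two-sided ideal generated by $B$ with the component $I_j$ containing it: since $B$ is a left ideal, this ideal is $BR=\sum_{r\in R}Br$, and each $Br$ is the image of the left-module map $b\mapsto br$, so it is zero or isomorphic to $B$; thus $BR$ is a nonzero two-sided ideal contained in $I_j$, and minimality of $I_j$ forces $BR=I_j$. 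Consequently $B$ is two-sided if and only if $B=BR=I_j$, that is, if and only if $\ell_j=1$. But $\ell_j=1$ is exactly the statement that $I_j$, and therefore all of $R$, contains no simple left ideal isomorphic to $B$ besides $B$ itself, since when $\ell_j\geq 2$ a second summand of $I_j$ provides another one. This gives the first equivalence.

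For the final sentence I would apply the same analysis to every component at once: all minimal left ideals have degree $1$ precisely when $\ell_j=1$ for each $j$, i.e. when every $I_j$ is simple as a left module. Each block $I_j$ is a ring with central identity, and its left $R$-submodules coincide with its ring-theoretic left ideals, so $I_j$ being simple as a module means it has no proper nonzero left ideals and is therefore a division ring; hence $R=\prod_j I_j$ is a finite direct product of division rings. The converse is immediate, as in a product of division rings each homogeneous component is a division ring that coincides with its unique minimal left ideal.

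I expect the main obstacle to be the purely ring-theoretic heart of the second paragraph, namely identifying $BR$ with the Wedderburn block $I_j$ and translating the single condition $\ell_j=1$ simultaneously into \emph{``$B$ is two-sided''} and \emph{``$B$ has no isomorphic companion left ideal''}. This is the point at which the ring structure genuinely enters, beyond the module-theoretic input furnished by Proposition~\ref{semideg1}; once the isotypic decomposition is set up, the remaining verifications are routine.
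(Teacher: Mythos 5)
Your proposal is correct and follows essentially the same route as the paper: the corollary is stated there as a direct application of Proposition~\ref{semideg1} (equivalence $(1)\Leftrightarrow(3)$ applied to $M={}_RR$), with the ring-theoretic translation left implicit. Your second and third paragraphs simply make explicit, via the Wedderburn decomposition into homogeneous components and the identification $BR=I_j$, the standard facts the paper takes for granted, and that filling-in is sound.
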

\begin{cor}
The sum-essential graph $\mS (M)$ possesses a vertex  of degree 1 if and only if  either
  $M$ is a semisimple module  possessing   a  proper simple submodule $B$ such that $M$ does not contain other  submodules isomorphic to
  $B$ or  $M$ is a chain module with the graph with two vertices \emph{(}then any vertex is
  of degree 1\emph{)}.
 \end{cor}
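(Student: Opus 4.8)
The plan is to prove the two implications separately, with Lemma~\ref{lemma deg1} and Proposition~\ref{semideg1} doing the heavy lifting. For the reverse direction I would argue as follows. If $M$ is semisimple with a proper simple submodule $B$ admitting no other submodule of $M$ isomorphic to it, then, since $B$ is a nonzero proper submodule, $M$ is not simple, so $B$ satisfies condition $(3)$ of Proposition~\ref{semideg1}; the equivalence there immediately yields $\deg_{\mS(M)}(B)=1$. If instead $M$ is a chain module whose graph has exactly two vertices $A\subset B$, then $M$ is uniform, hence every nonzero submodule is essential; in particular $A+B=B\ess M$, so $A\backsim B$ and both vertices have degree $1$.

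For the forward direction, assume $\deg_{\mS(M)}(B)=1$ and apply Lemma~\ref{lemma deg1}. If $B$ is not simple, the lemma already produces a simple $A\subset B$ with $\mV(\mS(M))=\{A,B\}$; then $0\subset A\subset B\subset M$ is the entire submodule lattice, so $M$ is a chain module whose graph has two vertices, which is the second alternative. So I may assume $B$ is simple. First I would dispose of the case $B\ess M$: here $B$ is a universal vertex, so $\deg_{\mS(M)}(B)=1$ forces $|\mV(\mS(M))|=2$, and the unique second vertex $A$ satisfies $A\cap B\ne 0$, hence $B\subseteq A$ by simplicity of $B$; thus $0\subset B\subset A\subset M$ again exhibits $M$ as a chain module with two vertices.

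The crux is the remaining case, $B$ simple and not essential, and this is where I expect the real work to lie. Let $C$ be a complement of $B$; then $B+C=B\oplus C\ess M$, so $B\backsim C$, and by degree $1$ this $C$ is the \emph{only} neighbour of $B$. The decisive step is to show $M=B\oplus C$: if $B\oplus C$ were proper, it would be a nontrivial submodule different from $B$ with $B+(B\oplus C)=B\oplus C\ess M$, forcing $B\oplus C=C$, which is absurd. Once $M=B\oplus C$ is established, I would prove $C$ is semisimple by excluding proper essential submodules: if $0\ne C'\subsetneq C$ were essential in $C$, then (via $B\oplus C'\ess B\oplus C$ iff $C'\ess C$) the submodule $B\oplus C'$ is essential in $M$, nontrivial, and distinct from both $B$ and $C$, hence adjacent to $B$ — contradicting $\deg_{\mS(M)}(B)=1$. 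Thus $C$ has no proper essential submodule, so $C$ is semisimple, and $M=B\oplus C$ is semisimple. Finally Proposition~\ref{semideg1}, now applicable, converts $\deg_{\mS(M)}(B)=1$ into condition $(3)$, giving that $M$ has no submodule isomorphic to $B$ besides $B$ itself — the first alternative.

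I anticipate the only genuine obstacle to be the essentiality bookkeeping in this last case: establishing $B\oplus C=M$ and correctly translating ``$C'$ is essential in $C$'' into ``$B\oplus C'$ is essential in $M$'' through the decomposition $M=B\oplus C$ (using that, in a direct sum, $B\oplus C'\ess B\oplus C$ precisely when $C'\ess C$). The two chain-module cases and the reverse direction should then follow as routine applications of the cited results.
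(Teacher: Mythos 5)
Your proof is correct and takes essentially the same route as the paper: the corollary is stated there as a direct application of Lemma~\ref{lemma deg1} and Proposition~\ref{semideg1}, which is exactly the skeleton of your argument. The details you work out (in particular, that $M=B\oplus C$ must be semisimple when $B$ is simple and non-essential of degree one, so that Proposition~\ref{semideg1} becomes applicable) are precisely the bookkeeping the paper leaves implicit.
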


Our next goal is to describe, in Theorem \ref{Theorem degree 1 gen. case},  modules $M$ such that $\mN(M)$ contains a vertex of degree one. For this we need a series of lemmas.

\begin{lem}\label{a08} Let $U$ be a   proper uniform submodule of $M$ and $S=U\cap soc(M)$. Suppose that   any submodule $D$  of $M$ satisfying $U\cap D=0$ is contained in $soc(M)$. Then:
\begin{enumerate}
  \item   $U+soc(M)\ess  M$;
  \item  If $S=0$, then $U'=soc(M)$ is the unique complement to $U$ in
 $M$;

\item  If $S\neq 0$, then $S$ is simple and  a submodule $C$ is a complement to $U$ in $M$ if and only if $C$ is the complement to $S$ in $soc(M)$. In particular, if $C$ is a complement to $U$ in $M$, then $C\oplus S=soc(M)$.
\end{enumerate}
\end{lem}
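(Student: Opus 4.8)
The plan is to extract everything from one structural fact: since $U$ is uniform, the hypothesis forces every submodule meeting $U$ trivially to live inside the semisimple module $soc(M)$, where essentiality collapses to equality. First I would fix a complement $C$ of $U$ in $M$ (which exists by Zorn's lemma). By hypothesis $C\subseteq soc(M)$, and since $C$ is a complement we have $U\oplus C \ess M$. For (1) it then suffices to note that $U\oplus C\subseteq U+soc(M)\subseteq M$, so essentiality propagates upward and $U+soc(M)\ess M$.

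For (2), suppose $S=0$, i.e. $U\cap soc(M)=0$. Any complement $C$ of $U$ satisfies $C\subseteq soc(M)$ by hypothesis, while $soc(M)$ itself is disjoint from $U$; maximality of $C$ among submodules meeting $U$ trivially then forces $C=soc(M)$, giving uniqueness. For the simplicity claim in (3), I would observe that $S=U\cap soc(M)$ is a nonzero submodule of the uniform module $U$, hence uniform, and simultaneously a submodule of the semisimple module $soc(M)$, hence semisimple; a uniform semisimple module is simple.

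The main content is the equivalence in (3). For the forward direction, take a complement $C$ of $U$; as above $C\subseteq soc(M)$ and $U\oplus C\ess M$. Intersecting the essential submodule $U\oplus C$ with $soc(M)$ gives an essential submodule of $soc(M)$, which---$soc(M)$ being semisimple---must be all of $soc(M)$, so $soc(M)\subseteq U\oplus C$. The modular law (using $C\subseteq soc(M)$) computes $(U\oplus C)\cap soc(M)=C\oplus(U\cap soc(M))=C\oplus S$, whence $C\oplus S=soc(M)$. This both shows $C$ is a complement to $S$ in $soc(M)$ and establishes the ``in particular'' clause. For the converse, start from $C$ with $C\oplus S=soc(M)$. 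Then $U\cap C\subseteq(U\cap soc(M))\cap C=S\cap C=0$, and since $S\subseteq U$ we get $U+C=U+soc(M)$, which is essential in $M$ by (1); thus $U\oplus C\ess M$.

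The hard part will be finishing this converse, namely upgrading ``$U\oplus C\ess M$ with $U\cap C=0$'' to ``$C$ is a complement of $U$'', since essentiality alone does not force maximality. This is exactly where the hypothesis is indispensable: given $D\supseteq C$ with $U\cap D=0$, the hypothesis puts $D\subseteq soc(M)$, so $D$ is semisimple; the modular law gives $D\cap(U\oplus C)=C$, and since $U\oplus C\ess M$ this intersection is essential in $D$, forcing $C\ess D$; semisimplicity of $D$ then yields $C=D$. Hence $C$ is maximal among submodules meeting $U$ trivially, i.e. a complement of $U$, completing the equivalence.
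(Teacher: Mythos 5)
Your proof is correct and follows essentially the same strategy as the paper: both arguments hinge on the hypothesis trapping every submodule disjoint from $U$ inside the semisimple socle, where essential submodules coincide with the whole module. The only cosmetic difference is in the converse of (3): the paper extends a complement $W$ of $S$ in $soc(M)$ to a complement of $U$ and invokes the forward direction plus maximality, while you verify maximality of $C$ directly via the modular law and semisimplicity — a minor variation that, if anything, makes the argument more self-contained.
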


\begin{proof}
Let us notice that, due to our assumption, any complement   $U'$   of $U$ in $M$ is contained in the socle of $M$. In particular (1) holds.  The above also implies (2).

   Suppose now that $S \neq 0$.
Then $S$ is a simple module, as $U$ is uniform. Let  $U'$ be a complement to   $U$ in $M$ and $W$ a complement to $S$ in $soc(M)$.  We know that $U'$ is semisimple, so $U'$ is also a complement  of $S$ in $soc(M)$ (as $S\oplus U'\ess  U\oplus U'\ess  M$). Moreover,  if $W\subseteq U'$, then $W=U'$, i.e. $W$ is a complement to $U$ in $M$. This yields (3).
\end{proof}
In the following lemma we collect basic properties of vertices of degree one in $\mN(M)$.
\begin{lem}\label{degree 1 lemma 1}  Let $U\in \mV(\mN (M))$. If $\deg_{\mN (M)}(U)=1$, then:
\begin{enumerate}
  \item  $U$ is a uniform module;
  \item   $\deg_{\mN (M)}(B)=1$, for every vertex $B\subseteq U$;
  \item  The complement to $U$ in $M$ is a semisimple module;
  \item   Let  $E$ be a submodule of $M$ such that $E\cap U \neq  0$ and $E+U\ess M$, then $soc(M)\subseteq E$.
  \end{enumerate}
  \end{lem}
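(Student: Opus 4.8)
The plan is to extract from the single hypothesis $\deg_{\mN (M)}(U)=1$ one structural fact and then use it four times. Since $U$ is a non-essential vertex, it admits a nonzero complement $C$, and $C$ is itself a vertex of $\mN (M)$: it is nonzero, proper, and non-essential (because $C\cap U=0$ with $U\neq 0$), and it is adjacent to $U$ since $U+C=U\oplus C\ess M$. Every complement of $U$ is adjacent to $U$ in exactly this way, so $\deg_{\mN (M)}(U)=1$ forces the complement to be \emph{unique}; I write $C$ for this unique complement, which is therefore the unique neighbour of $U$ in $\mN (M)$. Each of the four assertions is then proved either by producing a \emph{second} neighbour of $U$ (contradicting $\deg_{\mN (M)}(U)=1$) or by identifying a candidate neighbour with $C$.

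For (1), suppose $U$ is not uniform, so it contains nonzero submodules $A_1,A_2$ with $A_1\cap A_2=0$. Since $A_1\cap C\subseteq U\cap C=0$, the sum $A_1\oplus C$ is direct; it is non-essential because $A_2\cap(A_1\oplus C)=0$ (if $a_1+c\in A_2$ then $c\in U\cap C=0$, so the element lies in $A_1\cap A_2=0$), it differs from $U$ and from $C$ (the latter as $A_1\neq 0$), and $U+(A_1\oplus C)=U\oplus C\ess M$. Thus $A_1\oplus C$ is a neighbour of $U$ distinct from $C$, a contradiction; hence $U$ is uniform. Assertion (2) is immediate from the monotonicity of degree noted in the preliminaries (for vertices $X\subseteq A$ one has $\deg_{\mN (M)}(X)\le\deg_{\mN (M)}(A)$): with $X=B$ and $A=U$ this gives $\deg_{\mN (M)}(B)\le 1$, while $\delta(\mN (M))\ge 1$ always, so $\deg_{\mN (M)}(B)=1$.

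For (3), I will use that a nonzero module is semisimple if and only if it has no proper essential submodule. If $C$ were not semisimple it would contain a proper essential submodule $C_0$ with $0\neq C_0\subsetneq C$ and $C_0\ess C$. Then $U\cap C_0=0$ and $U\oplus C_0\ess U\oplus C\ess M$, so $C_0$ is a vertex of $\mN (M)$ adjacent to $U$ and different from both $U$ and $C$; two distinct neighbours again contradict $\deg_{\mN (M)}(U)=1$. Hence $C$ is semisimple.

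Finally, for (4) the crucial observation is that $E$ must be \emph{essential} in $M$. Indeed $E\cap U\neq 0$ gives $E\neq 0$, and $E+U\ess M$ forces $E\neq U$ (otherwise $U=E+U\ess M$, contradicting that $U$ is a vertex of $\mN (M)$); so if $E$ were non-essential it would be a vertex of $\mN (M)$ adjacent to $U$, whence $E=C$ by uniqueness of the neighbour, contradicting $C\cap U=0\neq E\cap U$. Therefore $E\ess M$, and since $soc(M)\cap E$ is essential in the semisimple module $soc(M)$ and hence equals it, we get $soc(M)\subseteq E$. The only genuinely delicate point is this last deduction—recognising that $E\cap U\neq 0$, $E+U\ess M$, and $\deg_{\mN (M)}(U)=1$ together force $E$ to be essential—whereas the remaining items reduce to the routine manufacture of a second neighbour of $U$.
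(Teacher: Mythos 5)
Your proof is correct, and its overall strategy coincides with the paper's: identify the unique complement $C$ as the unique neighbour of $U$, then either manufacture a second neighbour (contradiction) or force a candidate neighbour to equal $C$. Parts (1) and (3) are essentially the paper's arguments. The differences are in (2) and (4). For (2) the paper derives the claim ``$B+C\ess M$ for every nonzero $B\subseteq U$'' and reads off both (1) and (2) from it, whereas you invoke the monotonicity remark from the preliminaries ($X\subseteq A$ in $\mV(\mN(M))$ implies $\deg_{\mN(M)}(X)\le\deg_{\mN(M)}(A)$) together with $\delta(\mN(M))\ge 1$; this is a legitimate shortcut since both facts are stated explicitly in Section 1. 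For (4) the routes genuinely diverge: the paper fixes a simple submodule $S$, splits into the cases $S\subseteq U$ and $S\cap U=0$, and in the latter case produces two distinct neighbours of $U$ (namely $E$ and a complement of $U$ chosen to contain $S$). You instead show in one stroke that $E$ must be essential in $M$ --- otherwise $E$ would be a vertex adjacent to $U$, hence equal to $C$, contradicting $E\cap U\neq 0$ --- and then use the standard fact that every essential submodule contains $soc(M)$. Your version of (4) is shorter and avoids the per-simple case analysis and the Zorn-type choice of a complement through $S$; the paper's version, on the other hand, makes visible exactly which simple submodules land in $E$ and in which way the degree-one hypothesis is used, which is the form reused later (e.g.\ in Theorem \ref{Theorem degree 1 gen. case} and Proposition \ref{a04}).
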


\begin{proof}
Suppose $\deg_{\mN (M)}(U)=1$ and let $C$ be the  complement to  $U$ in $M$. Then, for any  nonzero submodule $B$ of $U$,  $ U+(B+C)\ess M$. Since   $\deg_{\mN (M)}(U)=1$ and $C\ne B+C$,    $B+C\ess M$ follows.
 This easily yields (1) and (2).

(3)   If $X$ is a proper essential submodule of $C$, then $X \oplus U \ess  M$, as $U \oplus X \ess  U\oplus C\ess  M$. Therefore, as $\deg_{\mN (M)}(U)=1$,   $C$ has no proper
essential submodules so is semisimple and (3) holds.

(4) By (1), $U$ is  uniform. Let $E$ be a submodule of $M$
satisfying assumptions of (4) and $S$ be  a simple submodule of $M$.
If $S\subseteq U$, then $S=S\cap (U\cap E)\subseteq E$.

Now suppose that $S\cap U=0$. If also  $S\cap E=0$  then $E$ would
not be essential in $M$ and $U$ would be adjacent to two different
vertices, $E$ and the complement
    of $U$ which contains $S$.
   Hence $S\subseteq E$ and  $soc(M)\subseteq   E$  follows.
\end{proof}

Lemmas \ref{a08} and \ref{degree 1 lemma 1} give immediately the following
\begin{cor}\label{cor. description of complement}
   Suppose   $U\in \mV(\mN (M))$ is a vertex of degree one.  Let  $C$ be the unique complement
   to $U$ in $M$  and $S=U\cap soc(M)$.   Then:
 \begin{enumerate}
 \item If $U\cap E=0$, for a submodule $E$ of $M$, then $E\subseteq soc(M)$;
   \item $C$ is the unique complement to $S$ in $soc(M)$;
    \item  $S=0$ iff    $C= soc(M)$ \emph{(}in this case $\ud(M)=\ud(soc(M))+1$\emph{)};

   \item  $S\ne 0$  iff $C\ne soc(M)$ \emph{(}in this case  $\ud(M)=\ud(soc(M)) $\,\emph{)};
    \end{enumerate}
   \end{cor}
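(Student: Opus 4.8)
The plan is to reduce the whole statement to a single application of Lemma \ref{a08}; the only genuine work is to verify that lemma's hypothesis for $U$, after which parts (2)--(4) are read off directly, the uniform-dimension formulas being supplied by routine arithmetic. I would first record the uniqueness of the complement $C$, which underlies everything. Since $U$ is nonessential it has a nonzero complement, and any complement $C'$ is disjoint from the nonzero submodule $U$ and is therefore itself nonessential; as $U+C'=U\oplus C'\ess M$, such a $C'$ is a neighbour of $U$ in $\mN(M)$. Because $\deg_{\mN(M)}(U)=1$ there is only one such neighbour, so the complement to $U$ is unique.

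Next I would prove part (1), which is the heart of the argument. Given a submodule $E$ with $U\cap E=0$, Zorn's Lemma enlarges $E$ to a submodule maximal with respect to disjointness from $U$; such a submodule is a complement to $U$ and so, by the uniqueness just noted, equals $C$. Hence $E\subseteq C$, and since $C$ is semisimple by Lemma \ref{degree 1 lemma 1}(3) we obtain $E\subseteq soc(M)$. This is precisely (1).

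The key observation is that (1) is exactly the standing hypothesis of Lemma \ref{a08}: every submodule disjoint from $U$ is contained in $soc(M)$. Since $U$ is a proper uniform submodule (uniform by Lemma \ref{degree 1 lemma 1}(1)), Lemma \ref{a08} now applies. When $S=0$, Lemma \ref{a08}(2) gives $C=soc(M)$, which is the unique complement of $S=0$ in $soc(M)$; conversely $C=soc(M)$ forces $S=U\cap soc(M)=U\cap C=0$. This yields the equivalence $S=0\Leftrightarrow C=soc(M)$ and settles (2) and (3) in this case. When $S\ne 0$, Lemma \ref{a08}(3) shows that $C$ is the complement of $S$ in $soc(M)$ with $C\oplus S=soc(M)$, and it is unique because the complement to $U$ is; in particular $C\ne soc(M)$, which gives (4).

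For the dimension formulas I would use that $U\oplus C\ess M$, that uniform dimension is invariant under essential extensions and additive on direct sums, and that $\ud(U)=1$. When $S=0$ this gives $\ud(M)=\ud(U)+\ud(C)=1+\ud(soc(M))$; when $S\ne 0$, $S$ is simple so $\ud(soc(M))=\ud(C)+1$, whence $\ud(M)=1+\ud(C)=\ud(soc(M))$. The only step carrying real content is part (1) --- recognizing that disjointness from $U$ must be absorbed into the (unique) complement, which is semisimple --- and once this is secured the hypothesis of Lemma \ref{a08} is met and everything else is bookkeeping.
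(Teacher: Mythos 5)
Your proposal is correct and follows essentially the same route as the paper: part (1) is deduced from the semisimplicity of the complement (Lemma \ref{degree 1 lemma 1}(3)), and this is then recognized as the hypothesis of Lemma \ref{a08}, from which (2)--(4) follow. You merely fill in details the paper leaves implicit (the Zorn's Lemma step, the uniqueness of $C$, and the uniform-dimension bookkeeping), so there is no substantive difference.
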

   \begin{proof}
    (1) is a direct consequence of Lemma \ref{degree 1 lemma 1}(3).  This statement yields (2) if $S=0$.  For $S\ne 0$, (2) is just Lemma \ref{a08}(3).  Now it is easy to complete the proof of the corollary.
   \end{proof}

 The statement (2) of Lemma \ref{degree 1 lemma 1} yields also:
\begin{cor}\label{a03} Suppose that the graph $\mN (M)$
 has only finitely many vertices of degree 1,
 then every such   vertex   contains a   simple
submodule of $M$.
\end{cor}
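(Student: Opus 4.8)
The plan is to argue by contradiction, using the local-degree statement of Lemma \ref{degree 1 lemma 1}(2) to manufacture infinitely many vertices of degree one whenever a degree-one vertex fails to contain a simple submodule. Suppose $U\in\mV(\mN(M))$ satisfies $\deg_{\mN(M)}(U)=1$ and assume, towards a contradiction, that $U$ contains no simple submodule. First I would record that every nonzero submodule of $U$ is automatically a vertex of $\mN(M)$: if $0\ne B\subseteq U$, then $B$ is nontrivial (indeed $B\ne 0$ by choice and $B\ne M$ since $B\subseteq U\subsetneq M$), and since $U$ is not essential in $M$ its submodule $B$ is not essential either. Thus $B\in\mV(\mN(M))$, and Lemma \ref{degree 1 lemma 1}(2) gives $\deg_{\mN(M)}(B)=1$.

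The key step is then to produce infinitely many distinct such submodules. Because $U$ contains no simple submodule, no nonzero submodule of $U$ is minimal, so every nonzero $V\subseteq U$ admits a proper nonzero submodule. Starting from $U_0=U$ and repeatedly choosing a proper nonzero submodule $U_{i+1}\subsetneq U_i$, I obtain a strictly descending chain $U=U_0\supsetneq U_1\supsetneq U_2\supsetneq\cdots$ of nonzero submodules of $U$. By the previous paragraph each $U_i$ is a vertex of $\mN(M)$ of degree one, and the $U_i$ are pairwise distinct since the chain is strict. This yields infinitely many vertices of degree one in $\mN(M)$, contradicting the hypothesis. Hence $U$ must contain a simple submodule, which is exactly the assertion.

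The argument is short and I do not expect a serious obstacle. The only point requiring care is verifying that each term of the descending chain genuinely lies in $\mV(\mN(M))$ — that is, that it is nontrivial and not essential in $M$ — so that Lemma \ref{degree 1 lemma 1}(2) is applicable; this is immediate from $U_i\subseteq U\subsetneq M$ together with the fact that $U$, being a vertex of $\mN(M)$, is not essential in $M$. One could equivalently phrase the contrapositive: if some degree-one vertex had no simple submodule it would force infinitely many degree-one vertices, but the contradiction formulation above is the cleanest route.
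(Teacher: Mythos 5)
Your proof is correct and follows the same route as the paper, which presents this corollary as an immediate consequence of Lemma \ref{degree 1 lemma 1}(2): every nonzero submodule of a degree-one vertex is again a degree-one vertex, so finiteness of the set of such vertices forces a minimal (hence simple) submodule. Your contradiction via an infinite strictly descending chain is just an explicit spelling-out of that same finiteness argument.
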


In the sequel  we will need yet another lemma.
  \begin{lem}\label{lemma on elements} Suppose $U$ is a nontrivial uniform submodule of a
   left $R$-module $M$ with a nonzero complement $C$, where $C\subseteq soc(M)$.
   Suppose there exist a
  simple submodule $F\nsubseteq U$, $0\neq f\in F$ and $0\neq u\in U$ such that
  for any $r\in R$, if $ru=0$ then $rf=0$. Then $\deg_{\mN (M)}(U)>1$.
  \end{lem}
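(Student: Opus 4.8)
The plan is to produce two distinct neighbours of $U$ in $\mN(M)$, which immediately forces $\deg_{\mN(M)}(U)\ge 2$. First I record the standing facts. Since $F$ is simple and $F\nsubseteq U$ we have $F\cap U=0$. The annihilator hypothesis $\ann(u)\subseteq\ann(f)$ says exactly that the rule $\phi(ru)=rf$ is a well-defined $R$-homomorphism $\phi\colon Ru\to F$ with image $Rf=F$; I shall use its graph
$$D=\{ru+rf\mid r\in R\},$$
a submodule of $U+F$ with $D+U=U\oplus F$ and $D\cap U=\ker\phi=\{ru\mid rf=0\}$. Next, $C$ is already one neighbour: as $C$ is a nonzero complement of $U$ we have $U\oplus C\ess M$, and since $C\cap U=0$ with $U\ne 0$ both $C$ and $U$ are nonessential, so $C$ is adjacent to $U$ in $\mN(M)$ and $C\ne U$. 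Finally, because $U\oplus C\ess M$ and $F\ne 0$ is simple, $F\cap(U\oplus C)\ne 0$ forces $F\subseteq U\oplus C=:N$; recall also that $C\subseteq\mathrm{soc}(M)$ is semisimple. It remains to exhibit a second neighbour different from $C$, and here I split according to the position of $F$.

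If $F\nsubseteq C$, no annihilator information is needed. Since $F\cap U=0$, Zorn's lemma lets me enlarge $F$ to a complement $C_2$ of $U$ in $M$, so $U\oplus C_2\ess M$ and $C_2$ is a neighbour of $U$ for exactly the same reason as $C$. As $F\subseteq C_2$ while $F\nsubseteq C$, we get $C_2\ne C$, giving the two required neighbours.

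If $F\subseteq C$, I use the graph $D$. Write $C=F\oplus C'$ (possible since $C$ is semisimple) and set $V=D+C'$, so that $N=U\oplus F\oplus C'$. Every $d=ru+rf\in D$ has components $(ru,rf,0)$ in this decomposition, whence $U+V=U+F+C'=U+C=N\ess M$; thus $V$ is adjacent to $U$ once it is seen to be a legitimate vertex. The key computation is $V\cap F=0$: a general element of $V$ is $(ru+rf)+c'$ with $c'\in C'$, having $U$-component $ru$, $F$-component $rf$ and $C'$-component $c'$; membership in $F$ forces $ru=0$ and $c'=0$, hence $rf=0$ by the annihilator hypothesis, so the element is $0$. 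Consequently $V$ is nonessential (it misses the nonzero $F$) and proper. Moreover the element $u+f\in D\subseteq V$ has nonzero $U$-component $u$ and nonzero $F$-component $f$ (here $f\in C$ has zero $U$-component), so it lies in neither $U$ nor $C$; this shows simultaneously $V\ne 0$, $V\ne U$, and $V\ne C$. Thus $C$ and $V$ are two distinct neighbours of $U$.

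The only genuine obstacle is the distinctness and nonessentiality of the second vertex. Built from an arbitrary $F\subseteq N$, the candidate $D+C'$ can degenerate: if the $U$-component of $f$ happens to equal $-u$, the graph $D$ collapses into $C$ and one only recovers $V=C$. Splitting on whether $F\subseteq C$ is precisely what removes this degeneracy, since for $F\subseteq C$ the $U$-component of $f$ is $0\ne -u$, keeping $D\nsubseteq C$. The annihilator hypothesis is used only in the case $F\subseteq C$, exactly to force $V\cap F=0$. Assembling the two cases produces in every situation a neighbour of $U$ distinct from $C$, so $\deg_{\mN(M)}(U)\ge 2>1$. (One could alternatively run the case $F\subseteq C$ through Lemma \ref{degree 1 lemma 1}(4), taking $E=V$ and noting $E\cap U=\ker\phi$, $E+U\ess M$ and $F\nsubseteq E$; but this requires $\ker\phi\ne 0$, and the direct two-neighbour argument above avoids that extra case.)
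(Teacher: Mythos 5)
Your proof is correct and follows essentially the same route as the paper: your graph submodule $D=R(u+f)$ and second neighbour $V=D+C'$ are exactly the paper's $R(u+f)$ and $R(u+f)+W$ (where the paper writes $C=W\oplus F$), with the annihilator hypothesis used in the same way to show this vertex meets $F$ trivially. The only difference is organizational: you split explicitly on whether $F\subseteq C$, while the paper disposes of the complementary case at the outset by noting that two distinct complements of $U$ settle the claim immediately (uniqueness of $C$ then forces $F\subseteq C$), which is precisely your first case.
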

  \begin{proof}  If $U$ has two different complements, then the result is clear. Thus let us assume that  $C$ is the unique complement to $U$ in $M$.
  Since $U$ is uniform,
  $Ru \oplus C \ess  U \oplus   C \ess  M$  and
  $Ru+C \ess  M$ follows.

  Let $u,f \in M$  be as in the formulation of the lemma and $r,s\in R$.
    Notice that if  $ru=su$ then $(r-s)u=0$, so also $(r-s)f=0$, i.e.
  $r(u+f)=s(u+f)$. This shows that the map
    $\varphi\colon  Ru \rightarrow R(u+f)$ defined by
   $\varphi(ru)=r(u+f)$ is a well-defined epimorphism of $R$-modules.   Moreover, since   $ru\in\ker\varphi$ gives $ru=-rf\in U\cap F=0$, $\varphi$ is an isomorphism.
   Observe that $0 \neq f \in R(u+f)+Ru$
  and hence, as $F$ is simple, we have $R (u+f) + Ru=Ru \oplus F$. Thus $R(u+f)+W+
  Ru \ess M$, where $C=W\oplus F$. Notice that $R(u+f)+W$ is not
  essential in $M$. Otherwise, $F\subseteq R(u+f)+W$ and then
  $f=ru+rf-w$, for some $r\in R$ and $w\in W$.
   So $ru=(f-rf)+w\in C \cap Ru=0$. Thus $ru=0$ and by assumption
   $rf=0$ follows. Now,  from $f=ru+rf-w$, we get $f=-w\in W$, a contradiction.
  Therefore $R(u+f)+W$ is not essential in $M$ and the essentiality of $R(u+f)+W
  +Ru$ in $M$ shows that $U$ is adjacent with two distinct
  vertices $C$ and $R(u+f)+W$. So $\deg_{\mN (M)}(U)>1$.
  \end{proof}
Now we are ready to prove the following theorem:

\begin{theorem}\label{Theorem degree 1 gen. case} Let
 $M$ a left $R$-module  and $U \in \mV(\mN (M))$.

The following   conditions are equivalent:

\begin{enumerate}
  \item  $\deg_{\mN (M)}(U)=1$;
  \item  \begin{enumerate}
   \item[(i)]$U$ is uniform  and the complement to $U$ in $M$ is unique and is semisimple;
\item[(ii)]  For every simple submodule
$F\nsubseteq U$,  $ 0\neq f\in F$ and  $ 0\neq  u\in U$ there exists an element
$r\in R$ such that $ru=0$ and $rf\neq 0$;
  \end{enumerate}
  \item  \begin{enumerate}
   \item[(i)]$U$ is uniform  and the complement to $U$ in $M$ is unique and is semisimple;

\item[(ii)]If $U\cap E \neq 0$ and
$U+E \ess  M$, for a submodule $E$ of $M$, then
$soc(M) \subseteq  E$;

  \end{enumerate}
  \item For any submodule $E$ of $M$ the following conditions hold:
\begin{enumerate}
 \item[(i)] If $U\cap E=0$  then $E\subseteq soc(M)$;
 \item[(ii)]  $U\cap E\ne 0$ and $U+E\leq_eM$, then $soc(M)\subseteq E$;
 \item[(iii)] $U$ is uniform and  $U\cap soc(M) $  has unique complement in $soc(M)$.
\end{enumerate}
\end{enumerate}
\end{theorem}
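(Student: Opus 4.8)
The plan is to establish the cycle of implications $(1)\Rightarrow(2)\Rightarrow(3)\Rightarrow(1)$, which settles the equivalence of the first three conditions, and then to prove $(3)\Leftrightarrow(4)$ separately. The last equivalence is cheap, since $(3)(ii)$ and $(4)(ii)$ are literally the same statement, so only $(3)(i)\Leftrightarrow(4)(i)+(4)(iii)$ remains, and this is exactly what Lemma~\ref{a08} is built to deliver. Assuming $(3)(i)$, any $E$ with $U\cap E=0$ extends to a complement of $U$, which by uniqueness is the semisimple complement $C$, so $E\subseteq C\subseteq soc(M)$, giving $(4)(i)$; Lemma~\ref{a08}(2)--(3) then matches complements of $U$ in $M$ with complements of $S=U\cap soc(M)$ in $soc(M)$, so uniqueness of the former yields $(4)(iii)$. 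Conversely, $(4)(i)$ is precisely the standing hypothesis of Lemma~\ref{a08}, and feeding $(4)(iii)$ into parts (2)--(3) of that lemma returns a unique complement of $U$ lying in $soc(M)$, i.e. $(3)(i)$.

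For $(1)\Rightarrow(2)$ I would read $(2)(i)$ off Lemma~\ref{degree 1 lemma 1}: parts (1) and (3) give that $U$ is uniform with semisimple complement, while uniqueness of the complement is forced because two distinct complements would be two distinct neighbours of $U$, contradicting $\deg_{\mN(M)}(U)=1$. Condition $(2)(ii)$ is then exactly the contrapositive of Lemma~\ref{lemma on elements}: the complement $C$ is semisimple, hence nonzero and contained in $soc(M)$, so a failure of $(2)(ii)$ would supply data $F\nsubseteq U$, $f$, $u$ meeting the hypotheses of that lemma and so force $\deg_{\mN(M)}(U)>1$.

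The implication I expect to carry the real content is $(2)\Rightarrow(3)$, where I must translate the elementwise annihilator condition $(2)(ii)$ into the submodule statement $(3)(ii)$; I would argue by contraposition. If $(3)(ii)$ fails there is a submodule $E$ with $U\cap E\neq 0$, $U+E\ess M$, together with a simple $F\subseteq soc(M)$ satisfying $F\nsubseteq E$, hence $F\cap E=0$. Since $U$ is uniform, $U\cap E$ is essential in $U$ and so contains $soc(U)=U\cap soc(M)$; this rules out $F\subseteq U$, so $F\cap U=F\cap E=0$. Essentiality of $U+E$ forces $F\subseteq U+E$, hence a nonzero $f\in F$ decomposes as $f=u_0+e_0$ with $0\neq u_0\in U$ and $e_0\in E$; then for every $r\in R$, $ru_0=0$ gives $rf=re_0\in F\cap E=0$, i.e. $\ann(u_0)\subseteq\ann(f)$, which is exactly the denial of $(2)(ii)$. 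This decomposition trick is the crux of the whole theorem, and the step I would expect to be the main obstacle to get clean.

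Finally, for $(3)\Rightarrow(1)$ note that $U\in\mV(\mN(M))$ is adjacent to its complement $C$, so $\deg_{\mN(M)}(U)\geq 1$, and it remains to show $C$ is the only neighbour. Let $E$ be any neighbour. If $U\cap E=0$ then $E$ lies inside the unique complement $C$, and since $C$ is semisimple while $U\oplus E\ess M$, a complementary summand of $E$ in $C$ would meet $U\oplus E$ trivially, forcing $E=C$. If instead $U\cap E\neq 0$, then $(3)(ii)$ gives $soc(M)\subseteq E$, and I claim $E\ess M$: for nonzero $X\leq M$, either $X\cap U=0$, whence $X\subseteq soc(M)\subseteq E$ because $(3)(i)$ pushes every submodule disjoint from $U$ into $soc(M)$, or $X\cap U\neq 0$, in which case $X\cap U$ and $U\cap E$ are two nonzero, hence essential, submodules of the uniform module $U$, so $X\cap U\cap E\neq 0$. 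Thus $E$ would be essential, contradicting $E\in\mV(\mN(M))$; the second case never occurs, and $\deg_{\mN(M)}(U)=1$ follows.
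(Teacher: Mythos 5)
Your proposal is correct and follows essentially the same route as the paper: the same cycle $(1)\Rightarrow(2)\Rightarrow(3)\Rightarrow(1)$ plus $(3)\Leftrightarrow(4)$ via Lemma~\ref{a08}, with $(2)$(i) and $(2)$(ii) read off Lemma~\ref{degree 1 lemma 1} (equivalently Corollary~\ref{cor. description of complement}) and Lemma~\ref{lemma on elements}, and with the same decomposition $f=u+e$ driving $(2)\Rightarrow(3)$ (you run it contrapositively, the paper runs it directly via $R(u+f)=Ru\oplus F$). Your $(3)\Rightarrow(1)$ replaces the paper's case split on $S=U\cap soc(M)$ being zero or not by a direct essentiality check of $E$ against an arbitrary $X$, but this is only a cosmetic reorganization of the same argument.
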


\begin{proof}
  $(1) \Rightarrow (2)$ Suppose $\deg_{\mN (M)}(U)=1$. Then part (i)
holds by Corollary  \ref{cor. description of complement} and part (ii) is a direct consequence of  Lemma \ref{lemma on elements}.

  $(2) \Rightarrow (3)$ Let $F$ be a simple submodule of $M$.  If $F\nsubseteq U$, then  the condition (2)(ii)
  implies that $R(u+f)\cap F\neq0$, for any   $0\ne u\in U$ and  $0\ne f\in F$.
Hence   $F\subseteq R(u+f)$. This shows that $R(u+f)=Ru\oplus F$,   for every simple submodule $F\nsubseteq U$, $0\neq f\in F$ and
    $ u\in U$.

    Let $E$ be a submodule of $M$ such that
$E+U\ess M$ and $U\cap E\ne 0$. If $F\subseteq U$  then, as $U$ is uniform, $F\subseteq U\cap E$, so $F\subseteq E$. Suppose $F\nsubseteq U$ and pick $0\ne f\in F$. Then $f=u+e$, for some suitable $u\in U$ and $e\in E$. If $u=0$ then $f=e\in E$ and $F\subseteq E$ follows. If $u\ne 0$, then $E\supseteq R(-u+f)=Ru\oplus F$. Thus in any case $F\subseteq E$ and (ii) follows.

  $(3) \Rightarrow (1)$ Let $C$ be a complement to $U$ in $M$. By (3)(i), $C$ is unique and   semisimple.   Let $E $ be a submodule of $M$ such
that $E+U\ess M$. If $E\subseteq C$, then $E=C$ as $C$ is semisimple. Thus, to prove the implication, it is enough to show that if $E\nsubseteq C$, then $E$ has to be essential in $M$. To this end, suppose $E\nsubseteq C$. Then, as the complement $C$ is unique,  $E \cap U \neq 0$. Therefore, by (3)(ii),  $soc(M)\subseteq E$. Notice also that the condition (3)(i) guarantees that $U$ satisfies assumptions of Lemma \ref{a08}. Let $S=U\cap soc(M)$.

 If $S=0$ (i.e. $C=soc(M)$).
Then the sum $(U\cap E)+soc(M)$ is direct.  By (i),  $U$ is uniform,  hence $(U\cap
E) \oplus  soc(M) \ess  U\oplus soc(M) \ess M$ . This
implies $E\ess M$, as $(U \cap E)\oplus soc(M) \subseteq E$.

 If $S\ne 0$ then Lemma \ref{a08}(3) implies that  $S$ simple and $C$ is the unique complement to $S$ in $soc(M)$. Then $soc(M)=S\oplus C\ess U\oplus C\ess M$ and $E\ess M$ follows, as $soc(M)\subseteq E$. This completes the proof of (1).

 We have already noticed that (3)(i) implies that $U$ satisfies the assumptions of   Lemma \ref{a08}. Then  the equivalence $(3)\Leftrightarrow (4)$ is a direct consequence of this lemma.
\end{proof}
\begin{remark}\label{rem uniqness}
   Using Proposition \ref{semideg1}, one can see  that  the condition (4)(iii) in the above theorem is equivalent to \\
(iii')  $U$ is uniform and $M$ has no other submodules isomorphic to $U\cap soc(M)$.
 \end{remark}
 The above remark together with Theorem \ref{Theorem degree 1 gen. case} and Proposition \ref{semideg1} yield:
\begin{remark}\label{rem form of complement}
 Suppose $\deg_{\mN(M)}(U)=1$.  Then $U$ is only adjacent to $C= \sum_{ F\;\mbox{\tiny  simple } F\neq S}F$, where $S=U\cap soc(M)$.
\end{remark}
Of course it may happen that the   complement to a uniform nonessential
 submodule $U$ of $M $  is unique  but $\deg_{\mN(M)}(U)>1$.
Indeed, let $R=\Z$ and $M=\Z\oplus \Z_2$. Then $R(1,0)$ has unique complement
(equal to $ R(0,1)=soc(M)$) but  $ R(1,0)$ is adjoint to any submodule $R(k,1)$, $k\in \N$. Thus $\deg_{\mN(M)}(R(1,0))=\infty$.

The following example and proposition offer  various possible interrelations between   submodules of degree one in a module $M$.
\begin{example}\label{exam5}
\begin{enumerate}\item Let $R = \mathbb{Z} \oplus \mathbb{Z}_2$, $M=_RR$ and $U\in \mN(M)$.
Then $\deg_{\mN(M)}(U)=1$  if and only if $U \subseteq R(1,0)=\Z$.
In this case  $U\cap soc(M)=0$.
  \item  Let $p,q \in \N$ be prime numbers, $R=\Z$ and $M=\Z_{p\infty}\oplus \Z_q$,
   where  $\Z_{p\infty}=\Z[\frac{1}{p}]/\Z$ is the $p$-quasicyclic group.
     Then $soc(M)=\Z_p\oplus \Z_q$. If $p\ne q$, then $\deg_{\mN(M)}(\Z_q)=\infty$ and
       a submodule $U\in \mN(M)$ is of degree 1  if and only if $U\subseteq \Z_{p\infty}$.
           If  $p=q$, then $\mN(M)$ has no vertices  of degree one.
  \item  Let $M$ be the $\mathbb{Z}$-module
 $\mathbb{Z}_4\oplus\Z_3$ . Then $soc(M)=2\Z_4\oplus \Z_3$, $\Z_4$ and $2\Z_4$ are
 the only submodules of degree one,  while $\deg_{ \mathcal{P}_ \mathbb{Z}  (M)} (\Z_3)=2$.
 \item Let $P\subseteq \N$ denote the set of primes. Then any simple submodule of
  the $\Z$-module  $\bigoplus_{p\in P}\Z_p$  is of degree 1.
\end{enumerate}
 \end{example}

\begin{prop}\label{a04} Suppose there exist two distinct submodules $A$ and $B$ of a
module $M$ such that $\deg_{\mN (M)}(A)=\deg_{\mN (M)}(B)=1$. Then:
\begin{enumerate}
  \item  If $A\cap B=0$ then $A$ and $B$ are simple non isomorphic modules;
  \item  If $A\cap B\neq 0$ then $\deg_{\mN (M)}(A+B)=1$;
  \item  If $A+B \ess  M$  then
$A, B$ are simple non isomorphic and $soc(M)=A+B$.
\end{enumerate}
\end{prop}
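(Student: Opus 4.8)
The plan is to prove the three parts in order and to use part (2) to reduce part (3) to part (1). Throughout I will freely use that a degree-one vertex of $\mN(M)$ is uniform with a unique semisimple complement (Lemma \ref{degree 1 lemma 1} and Corollary \ref{cor. description of complement}), together with the equivalent forms of the degree-one condition collected in Theorem \ref{Theorem degree 1 gen. case}. For (1), suppose $A\cap B=0$. Applying Corollary \ref{cor. description of complement}(1) to $A$ with $E=B$ gives $B\subseteq soc(M)$; since $B$ is uniform and a uniform semisimple module is simple, $B$ is simple, and symmetrically so is $A$. If moreover $A\cong B$, then $A\subseteq soc(M)$ gives $A\cap soc(M)=A$, so the reformulation (iii$'$) of the degree-one condition for $A$ in Remark \ref{rem uniqness} asserts that $M$ has no submodule isomorphic to $A$ other than $A$ itself; this contradicts $B\cong A$, $B\neq A$. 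Hence $A,B$ are simple and non-isomorphic.

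For (2), suppose $A\cap B\neq0$. As $A,B$ are uniform, $A\cap B\ess A$ and $A\cap B\ess B$. I would first show $A+B$ is uniform. If it were not, the uniform submodule $A\cap B$ would not be essential in $A+B$, so some $0\neq Z\subseteq A+B$ would satisfy $Z\cap(A\cap B)=0$; essentiality of $A\cap B$ in $A$ and in $B$ then forces $Z\cap A=Z\cap B=0$. By Corollary \ref{cor. description of complement}(1) for $A$ we get $Z\subseteq soc(M)$, so $Z$ contains a simple $F$ with $F\cap A=F\cap B=0$, in particular $F\nsubseteq A$. Writing $0\neq f=a+b\in F$ with $a\in A$, $b\in B$, necessarily $a\neq0$, and condition (2)(ii) of Theorem \ref{Theorem degree 1 gen. case} applied to $A$ yields $r\in R$ with $ra=0$ and $rf\neq0$; but then $rf=rb\in F\cap B=0$, a contradiction. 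Thus $A+B$ is uniform, hence non-essential in the non-uniform module $M$, so $A+B\in\mV(\mN(M))$.

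It then remains to verify the two conditions of Theorem \ref{Theorem degree 1 gen. case}(3) for $A+B$. Since now $A\cap B\ess A+B$, one has $soc(A+B)=soc(A\cap B)=soc(A)=A\cap soc(M)$, and $A+B$ satisfies the hypothesis of Lemma \ref{a08} (any $D$ meeting $A+B$ trivially meets $A$ trivially, hence lies in $soc(M)$); therefore the complements of $A+B$ are precisely the complements of $(A+B)\cap soc(M)=A\cap soc(M)$ in $soc(M)$, and the latter is unique because $\deg_{\mN(M)}(A)=1$. This gives (3)(i). For (3)(ii), let $(A+B)\cap E\neq0$ and $(A+B)+E\ess M$. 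Applying condition (3)(ii) for $B$ to the submodule $A+E$ (legitimate since $B\cap(A+E)\supseteq A\cap B\neq0$ and $B+(A+E)\ess M$) yields $soc(M)\subseteq A+E$. Now fix a simple $F\subseteq soc(M)$: if $F\subseteq A+B$, then $F$ meets the essential submodule $(A+B)\cap E$ of the uniform module $A+B$, so $F\subseteq E$; if $F\nsubseteq A+B$, then $F\cap A=0$, and writing $0\neq f=a+e\in F$ with $a\in A$, $e\in E$, either $a=0$ and $f\in E$, or $a\neq0$ and condition (2)(ii) for $A$ gives $r$ with $ra=0$ and $0\neq rf=re\in E$; in all cases $F\subseteq E$. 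Hence $soc(M)\subseteq E$, which is (3)(ii), and Theorem \ref{Theorem degree 1 gen. case} yields $\deg_{\mN(M)}(A+B)=1$.

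Finally, for (3) assume $A+B\ess M$. If $A\cap B\neq0$, part (2) would make $A+B$ a vertex of $\mN(M)$, i.e.\ non-essential, contradicting $A+B\ess M$; hence $A\cap B=0$ and part (1) shows that $A,B$ are simple and non-isomorphic. Then $A+B=A\oplus B\subseteq soc(M)$, while $A+B\ess M$ forces every simple submodule of $M$ to meet, and hence to lie in, $A+B$; therefore $soc(M)\subseteq A+B$ and $soc(M)=A+B$. The main obstacle is part (2): establishing uniformity of $A+B$ and verifying the essentiality condition (3)(ii). In both places the decisive tool is the element-level degree-one criterion (2)(ii) of Theorem \ref{Theorem degree 1 gen. case}, which lets me annihilate the $A$-component of a troublesome element while keeping a simple submodule $F$ alive, thereby pushing $F$ into $B$ or into $E$ and collapsing it.
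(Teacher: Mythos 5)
Your proof is correct, but it is organized differently from the paper's, most notably in parts (2) and (3). Part (1) is essentially the paper's argument (both push $B$ into $soc(M)$ via the degree-one condition and invoke Remark \ref{rem uniqness}). After that the logical order is reversed: the paper proves (3) first, as an immediate consequence of Remark \ref{rem form of complement} (two adjacent degree-one vertices must each be the other's canonical complement $\sum_{F \neq S} F$, which is semisimple, hence each is simple), and then uses (3) inside the proof of (2) to see that $A+B$ is not essential; any neighbour $D$ of $A+B$ is then pinned down to the common complement $C$ of $A$, $B$ and $A\cap B$ by applying the degree-one criterion to $E=B+D$, using $B+soc(M)\ess M$ (Lemma \ref{a08}(1)) and $\deg_{\mN(M)}(B)=1$. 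You instead prove (2) independently of (3): you first establish that $A+B$ is \emph{uniform} by a direct element-level argument with Theorem \ref{Theorem degree 1 gen. case}(2)(ii) (annihilating the $A$-component while keeping a putative simple $F$ alive), conclude non-essentiality from non-uniformity of $M$, and then verify the full characterization (3)(i)--(ii) of Theorem \ref{Theorem degree 1 gen. case} for $A+B$; part (3) then follows from (1) and (2) by a short contradiction argument. What the paper's route buys is brevity: (3) is a one-line consequence of Remark \ref{rem form of complement}, and (2) only needs to identify the single possible neighbour. What your route buys is a cleaner dependency structure ((1),(2) $\Rightarrow$ (3) rather than (3) $\Rightarrow$ (2)) and, as an explicit by-product, the uniformity of $A+B$ -- a fact the paper only records after the proposition as a consequence of degree-one vertices being uniform. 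Both arguments are complete; I see no gaps in yours.
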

 \begin{proof}
(1) Suppose $A\cap B=0$. Then, by Theorem \ref{Theorem degree 1 gen. case}(4) applied to $U=A$ and $U=B$, we see that $A$  and $B$ are simple modules. Moreover, due to Remark \ref{rem uniqness}, $A\not \simeq B$.

 (3) is an immediate consequence of Remark \ref{rem form of complement}.

(2) Suppose  $A\cap B \neq 0$. If $A$ and $B$ are simple, the thesis is clear. Suppose     $B$ is not simple and let $S=A\cap soc(M)=B\cap soc(M)$.   Then,
 by (3) $A+B$ is not essential in $M$. Moreover   $A$,  $B$ and $A\cap B$  have the same unique complement $C$ described in Remark \ref{rem form of complement}.

Let $D$ be a vertex of $\mN (M)$, which is adjacent  to $A+B$.
 Applying Theorem \ref{Theorem degree 1 gen. case} to $U=A$ and $E=B+D$
  we obtain $B+soc(M)\subseteq  E=B+D$. Hence, as  $B+soc(M)\ess  M$, also
   $B+D\ess  M$ and  $\deg_{\mN(M)}(B)=1$  gives $D=C$, i.e. $\deg(A+B)=1$.
 \end{proof}
 As we have seen earlier, submodules of $M$ which are
  of degree one  as elements of $\mN(M)$ are uniform. By the above
   lemma, if $A$ and $B$ are such submodules having nonzero intersection then
    $A+B$ is a uniform submodule of $M$. Notice that, in general, a sum of
     two uniform submodules having nonzero intersection is not always uniform.

We close this section with the following theorem:
\begin{theorem}\label{Thm largest submod deg 1} Let $M$ be a module  containing a submodule $A$ of degree 1 in  $ \mN(M) $. Then either all submodules of degree 1 are simple or $M$ contains the unique largest submodule of degree 1.
\end{theorem}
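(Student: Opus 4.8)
The plan is to prove the dichotomy in contrapositive form: assuming $\mN(M)$ has a vertex of degree $1$ that is \emph{not} simple, I would construct the unique largest submodule of degree $1$. So suppose not all degree-one submodules are simple and fix a non-simple $A$ with $\deg_{\mN(M)}(A)=1$. Write $\mathcal D$ for the set of all degree-one vertices of $\mN(M)$ and set $L=\sum_{D\in\mathcal D}D$. By Lemma \ref{degree 1 lemma 1}(1) every $D\in\mathcal D$ is uniform. The goal is to show $L\in\mathcal D$; since $L$ visibly contains every degree-one submodule, this makes $L$ the unique largest submodule of degree $1$.

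First I would show that $\mathcal D$ is directed under inclusion. Because $A$ is not simple, Proposition \ref{a04}(1) forbids $A\cap B=0$ for any $B\in\mathcal D$ (otherwise $A$ would be simple); hence $A\cap B\neq0$ for all $B\in\mathcal D$. As $A$ is uniform, for $B,B'\in\mathcal D$ the nonzero submodules $A\cap B$ and $A\cap B'$ meet, so $B\cap B'\neq0$, and Proposition \ref{a04}(2) then gives $B+B'\in\mathcal D$. Thus $\mathcal D$ is closed under finite sums and $L=\bigcup_{D\in\mathcal D}D$ is a directed union; in particular $L$ is uniform, since any two nonzero elements of $L$ lie in a common member of $\mathcal D$.

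Next I would pin down the socle. For $D\subseteq D'$ in $\mathcal D$ one has $soc(D)\subseteq soc(D')$, and both are $0$ or simple by uniformity; if $soc(D')=S'\neq0$, then $S'$ and $D$ being nonzero submodules of the uniform module $D'$ forces $S'\subseteq D$, so $soc(D)=S'$. Hence $soc$ is constant along every chain, and by directedness (as $D,D'\subseteq D+D'\in\mathcal D$) the module $S:=D\cap soc(M)$ is the same for all $D\in\mathcal D$; consequently $L\cap soc(M)=S$. By Corollary \ref{cor. description of complement} every $D$ then shares one and the same unique complement $C$, the complement of $S$ in $soc(M)$, and Remark \ref{rem uniqness} applied to $A$ guarantees that $M$ has no further submodule isomorphic to $S$.

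Finally I would verify $\deg_{\mN(M)}(L)=1$ via Theorem \ref{Theorem degree 1 gen. case}(2). Condition (ii) transfers for free: given a simple $F\nsubseteq L$, $0\neq f\in F$ and $0\neq u\in L$, pick $D\in\mathcal D$ with $u\in D$; then $F\nsubseteq D$, and the element criterion for $D$ supplies $r\in R$ with $ru=0$ and $rf\neq0$. For condition (i), $L$ is uniform and proper (as $M$, having nonempty $\mN(M)$, is not uniform), and any $D_0$ with $L\cap D_0=0$ satisfies $A\cap D_0=0$, whence $D_0\subseteq soc(M)$ by Corollary \ref{cor. description of complement}(1). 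Thus $L$ meets the hypotheses of Lemma \ref{a08}, which yields that $C$ is the unique, semisimple complement to $L$; since $C\neq0$ and $L\cap C=0$, the submodule $L$ is nonessential and lies in $\mV(\mN(M))$. Theorem \ref{Theorem degree 1 gen. case}(2) then gives $\deg_{\mN(M)}(L)=1$, so $L$ is the desired largest submodule. The main obstacle is exactly this last step, namely upgrading the degree-one property from the members of $\mathcal D$ to their possibly infinite sum $L$: the element condition passes to the directed union trivially, so the real content is controlling the complement of $L$, which is why the constancy of the socle $S$ across $\mathcal D$ together with Lemma \ref{a08} is needed.
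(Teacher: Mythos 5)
Your proof is correct. It rests on the same two pillars as the paper's argument --- Proposition \ref{a04} (giving $A\cap B\neq 0$ for every degree-one vertex $B$ and closure of degree-one vertices under sums) and the element criterion of Theorem \ref{Theorem degree 1 gen. case}(2)(ii), which passes trivially to directed unions --- but you organize the limit step differently. The paper restricts attention to the poset $\mathcal{A}$ of degree-one vertices containing $A$, proves that every chain in $\mathcal{A}$ is bounded (the union of a chain is again a degree-one vertex), and then invokes Zorn's Lemma together with Proposition \ref{a04}(2) to produce the largest element. You instead observe that the whole family $\mathcal{D}$ of degree-one vertices is directed (closed under finite sums), so $L=\sum_{D\in\mathcal{D}}D$ is a directed union that visibly contains every degree-one vertex, and you verify $\deg_{\mN(M)}(L)=1$ directly; this dispenses with Zorn's Lemma and exhibits the largest degree-one submodule explicitly. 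The two proofs also differ in how they control the complement of the limit module: the paper proves an ad hoc claim that the fixed complement $C$ is the unique complement of every uniform submodule containing a fixed degree-one vertex $B$, whereas you route this through Corollary \ref{cor. description of complement}(1) (to check the hypothesis of Lemma \ref{a08}) together with your observation that $S=D\cap soc(M)$ is constant across $\mathcal{D}$. What your version buys is a choice-free and more explicit construction of the largest submodule; the paper's version is slightly shorter because Zorn's Lemma absorbs the bookkeeping about the whole family. (One small remark: your appeal to Remark \ref{rem uniqness} is not actually used in the rest of your argument and could be dropped.)
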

\begin{proof}
 Suppose $M$ contains a not simple submodule $A$  such that  $\deg_{\mN(M)}(A)=1$. Let $B\in\mV(\mN(M))$ with $\deg_{\mN(M)}(B)=1$. Then, by Proposition \ref{a04}, we know that  $A\cap B\ne 0$ and  $\deg_{\mN(M)}(A+B)=1$. Let $S=A\cap soc(M)$ and $C$ be the   complement to $S$ in $soc(M)$. Then, by Remark \ref{rem form of complement}, $C=\sum_{ F\;\mbox{\tiny  simple } F\neq S}F$ is the unique complement to $B$ in $M$.

   Let $U$ be a uniform submodule such that $B\subseteq U$.
   We claim $C$ is a complement to $U$ in $M$. Clearly $U+C\ess  M$.
    If $U\cap C\ne 0$, then there exists a simple module $F\subseteq C$
     such that $F\subseteq U$. Then $F\subseteq B\cap soc(M)=S $, as $U$
      is uniform, a contradiction. This shows that $U\cap C=0$.
      Notice also that if  $U\oplus D \ess  M$ then $B\cap D=0$ and
      $D\subseteq C$ follows, as $\deg_{\mN(M)}(B)=1$. This proves the claim.

   Let $\mathcal{A}=\{B\in \mV(\mN(M))\mid \deg_{\mN(M)}(B)=1 \mbox{ and } A\subseteq B \}$,
           Notice that to prove the theorem   it is enough to show only that every chain in $\mathcal{A}$ is bounded, then the thesis is a consequence of Zorn's Lemma and Proposition \ref{a04}(2). To this end, let     $\{U_i\}$  be a chain in $\mathcal{A}$ and  $U= \bigcup_i U_i$. Then $U$ is uniform and the above shows that $C$ is the unique complement to $U$ in $M$. Let $F$ be a simple submodule of $M$ such that  $F\nsubseteq U$. Let   $ 0\neq f\in F$ and   $ 0\neq  u\in U$. Then $u\in U_i $, for some index $i$. Since $\deg_{\mN(M)}(U_i)=1$ we may apply Theorem \ref{Theorem degree 1 gen. case}(2)(ii) to find $r\in R$ such that     $ru=0$ and $rf\neq 0$. The above shows that $U$ satisfies  Theorem \ref{Theorem degree 1 gen. case}(2) and proves that $\deg_{\mN(M)}(U)=1$, i.e. $U\in \mathcal{A}$.   This yields the thesis.
\end{proof}
Let us notice that in case $\mN(M)$ is a finite graph, the above theorem is a direct consequence of Proposition  \ref{a04}.

\section{\bf  Complete,  $k$-regular  and triangle-free graphs   $\mS (M)$ and  $\mN (M)$}
Let us begin this section with the following easy observation:
\begin{lem}\label{unv vertex lemma}
 Let $A\in \mV(\mS(M))$ be a  universal vertex. If $A\not \ess M$, then $A$ is a simple module.
\end{lem}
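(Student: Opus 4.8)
The plan is to argue by contradiction, exploiting the single fact that a universal vertex must be adjacent to \emph{every} other vertex, and in particular to every submodule lying inside it. Suppose $A$ is a universal vertex of $\mS(M)$ with $A\not\ess M$, and assume toward a contradiction that $A$ is not simple. The goal is then to exhibit one vertex of $\mS(M)$ to which $A$ fails to be adjacent.

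First I would use the non-simplicity of $A$ to pick a submodule $A'$ with $0\neq A'\subsetneq A$; such an $A'$ exists exactly because $A$ is not simple. Since $0\neq A'\subsetneq A\subsetneq M$, the submodule $A'$ is nontrivial (neither $0$ nor $M$), hence a vertex of $\mS(M)$, and it is distinct from $A$. The key observation is the trivial identity $A+A'=A$, valid because $A'\subseteq A$. As $A$ is not essential in $M$, the sum $A+A'=A$ is not essential either, so by the definition of adjacency $A$ and $A'$ are \emph{not} adjacent. This contradicts the assumption that $A$ is a universal vertex, which forces adjacency to every other vertex, in particular to $A'$. Consequently $A$ can have no nonzero proper submodule, so $A$ is simple.

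I do not expect any genuine obstacle here: the argument rests entirely on the equality $A+A'=A$ for $A'\subseteq A$ together with the definition of an edge in $\mS(M)$, and no essentiality machinery or complements are needed. The only point that warrants a line of care is verifying that $A'$ really is a vertex, i.e.\ that it is neither $0$ nor $M$; this is immediate from $0\neq A'\subsetneq A\subsetneq M$, the inclusion $A\subsetneq M$ holding because $A$ is by hypothesis a nontrivial submodule of $M$.
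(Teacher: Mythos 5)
Your proof is correct and is essentially the paper's own argument: both rest on the observation that for any submodule $A'\subseteq A$ one has $A+A'=A\not\ess M$, so a non-essential universal vertex cannot contain any nontrivial proper submodule. You merely phrase it as an explicit contradiction and verify that $A'$ is a genuine vertex, which the paper leaves implicit.
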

\begin{proof} Notice that if $A\not \ess M$ then $A$ can not be adjacent to any of its submodules. Since $A$ is a universal vertex, the thesis follows.
\end{proof}

    The following theorem gives a characterization  of modules with a complete sum-essential graph.

 \begin{theorem}\label{thm complete graph}
 Let $M$ be a nonzero nonsimple module. Then,  $\mS (M)$ is a complete graph
 if and only if  one of the following  conditions hold:
 \begin{enumerate}
   \item  $M$ is a uniform module;
   \item  \begin{enumerate}
            \item[(i)]  Every nonzero nonessential submodule of $M$ is simple;
            \item[(ii)]  $soc(M)=S_1\oplus S_2\ess    M$, where $S_1, S_2$ are simple submodules.
          \end{enumerate}
     \end{enumerate}
 In particular, if $M$ is semisimple, then $\mS (M)$ is a
  complete graph if and only if $M$ is direct sum of two simple modules.
    \end{theorem}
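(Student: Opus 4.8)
The plan is to argue both implications straight from the adjacency rule, namely that distinct vertices $A,B$ are joined iff $A+B\ess M$, splitting throughout into the uniform and non-uniform cases and using Lemma \ref{unv vertex lemma} to control the nonessential vertices.

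For sufficiency I would first dispose of case (1): if $M$ is uniform then every nonzero submodule is essential, so for any two vertices $A\ne B$ the sum $A+B\supseteq A$ is essential and $A\backsim B$, whence $\mS(M)$ is complete. For case (2), take vertices $A\ne B$. If at least one of them is essential then $A+B$ contains an essential submodule and so is itself essential. If both are nonessential, then by (2)(i) both are simple and therefore lie in $soc(M)=S_1\oplus S_2$; since $A\ne B$ are distinct simple submodules we get $A\cap B=0$, so $A\oplus B\subseteq soc(M)$ has composition length $2=\ud(soc(M))$, forcing $A\oplus B=soc(M)$, which is essential by (2)(ii). Hence $A\backsim B$ in every case.

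For necessity I would assume $\mS(M)$ complete and $M$ non-uniform (if $M$ is uniform, (1) holds and we stop), and recover (2). Completeness makes every vertex universal, so Lemma \ref{unv vertex lemma} immediately gives that each nonzero nonessential submodule is simple, which is (2)(i). Non-uniformity supplies a nonzero nonessential submodule $U_1$, simple by (2)(i); I would then pick a complement $U_2$ of $U_1$ in $M$. Since $U_1$ is nonessential, $U_2\ne 0$ and $U_1\oplus U_2\ess M$, while $U_1\cap U_2=0$ forces $U_2$ nonessential and hence simple by (2)(i). As $U_1\oplus U_2\subseteq soc(M)$ and $U_1\oplus U_2\ess M$, we have $U_1\oplus U_2\ess soc(M)$; because $soc(M)$ is semisimple its only essential submodule is itself, so $soc(M)=U_1\oplus U_2$ is a direct sum of two simple modules, essential in $M$, which is (2)(ii). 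The ``in particular'' clause then follows at once: a semisimple nonsimple module cannot be uniform (a nonzero proper summand would destroy uniformity), so there completeness is equivalent to (2); since $soc(M)=M$, condition (2)(ii) reads $M=S_1\oplus S_2$, and (2)(i) is automatic as every proper submodule of $S_1\oplus S_2$ has length one.

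The proof is short, so the only genuinely delicate point is the nonessential sub-case of sufficiency, where two distinct simple submodules $A,B$ must be shown to sum to the \emph{essential} socle and not merely to some proper submodule of it; this is precisely where the length count $\ud(soc(M))=2$ is indispensable to pass from $A\oplus B\subseteq soc(M)$ to $A\oplus B=soc(M)$. A secondary care point is ensuring in the necessity argument that the complement $U_2$ is nonzero, which I would read off from the nonessentiality of $U_1$.
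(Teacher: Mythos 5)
Your proof is correct and follows essentially the same route as the paper: the necessity direction rests on Lemma \ref{unv vertex lemma} to obtain (2)(i), followed by a complement argument identifying $soc(M)$ with $U_1\oplus U_2$ to obtain (2)(ii). The only difference is thoroughness --- the paper's proof is a two-line sketch covering just that direction, while you also write out (correctly) the sufficiency direction, including the key length/dimension count showing two distinct simple nonessential vertices must sum to all of $soc(M)$, and the ``in particular'' clause for semisimple $M$, both of which the paper leaves to the reader.
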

    \begin{proof}
     Suppose $M$ is not uniform. Then Lemma \ref{unv vertex lemma} shows that (2)(i) holds. Now (2)(ii) is a consequence of (i) and the assumption that every vertex is universal.
    \end{proof}

 Let us notice that  every universal vertex of $\mN(M)$ is also universal
  in $\mS(M)$ and hence a nonempty graph $\mN(M)$ is  complete  if and only if
$\mS(M)$ is such.  Thus  the above theorem gives the
following corollary:
 \begin{cor}
  Suppose $M$ is not uniform.  Then,  $\mN (M)$ is a complete graph
 if and only if   the following  conditions hold:
 \begin{enumerate}
            \item[(i)]  Every nonzero nonessential submodule of $M$ is simple;
            \item[(ii)]  $soc(M)=S_1\oplus S_2\ess    M$, where $S_1, S_2$ are simple submodules.
          \end{enumerate}
 \end{cor}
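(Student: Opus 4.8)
The plan is to deduce this corollary directly from Theorem \ref{thm complete graph}, using the bridging fact noted just above it that completeness of $\mN(M)$ is equivalent to completeness of $\mS(M)$. First I would record that the hypotheses of Theorem \ref{thm complete graph} are met: since a simple module is uniform, the assumption that $M$ is not uniform forces $M$ to be nonzero and nonsimple. Moreover, $M$ not uniform yields nonzero submodules $A,B$ with $A\cap B=0$; then $A$ is a nontrivial nonessential submodule, so $\mN(M)$ is a nonempty graph, which is what the bridging equivalence requires.

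Next I would verify the bridging equivalence in detail, as this is the only step carrying any content. The key point is that a universal vertex $A$ of $\mN(M)$ is automatically universal in $\mS(M)$: the only vertices of $\mS(M)$ not lying in $\mN(M)$ are the essential submodules $E$, and for such $E$ one has $A+E\supseteq E\ess M$, so $A$ is adjacent to $E$ as well. Consequently, if $\mN(M)$ is complete, then each of its vertices is universal in $\mN(M)$, hence universal in $\mS(M)$; combined with the fact (recorded in the preliminaries) that every essential submodule is already a universal vertex of $\mS(M)$, this shows every vertex of $\mS(M)$ is universal, so $\mS(M)$ is complete. The converse is immediate, since $\mN(M)$ is an induced subgraph of $\mS(M)$ and induced subgraphs of complete graphs are complete.

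Finally I would invoke Theorem \ref{thm complete graph}. Because $M$ is not uniform, alternative (1) of that theorem is excluded, so $\mS(M)$ is complete if and only if its conditions (2)(i) and (2)(ii) hold, which are exactly conditions (i) and (ii) of the present corollary. Combining this with the bridging equivalence, $\mN(M)$ is complete if and only if (i) and (ii) hold, as claimed. I do not expect any genuine obstacle here: the whole argument is a direct application of the theorem, and the only place requiring care is the bridging step, where one must separate the essential vertices (adjacency is automatic) from the nonessential ones (the vertices of $\mN(M)$).
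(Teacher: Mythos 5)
Your proposal is correct and follows essentially the same route as the paper: the paper derives this corollary from Theorem \ref{thm complete graph} via the very remark you call the ``bridging equivalence'' (a universal vertex of $\mN(M)$ is universal in $\mS(M)$, hence a nonempty $\mN(M)$ is complete iff $\mS(M)$ is). You merely spell out in more detail the verification that the paper leaves as an observation.
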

 If $M$ does not contain proper essential submodules (i.e. if $M$ is semisimple) something more can be said. The last statement of the following corollary is a consequence of Example \ref{basic ex}.
\begin{cor} \label{b6} For a semisimple module $M$, the following conditions are equivalent:
   \begin{enumerate}
     \item  $\mN (M)$ is a complete graph;
     \item  $\mN(M)$ contains a universal vertex;
     \item  $M=S_1\oplus S_2$ for some simple modules
$S_1$ and $S_2$.
   \end{enumerate}
   In this case $\mS (M)$ is a complete graph and $|
\mV(\mN (M))|=| \mV(\mS (M))| =| Hom(S_1,S_2)|+1$.
\end{cor}

 \begin{cor}\label{b2}Let $R$ be a semisimple ring with a nontrivial left ideal. Then
  $\mS(R)$ is a complete graph if and only if $R\cong M_2(D)$ or $R\cong D_1\times
  D_2$, where $D$, $D_1$ and $D_2$ are division rings.
 \end{cor}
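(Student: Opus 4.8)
The plan is to translate the module-theoretic characterization of completeness into ring structure via the Wedderburn--Artin theorem. Since $R$ is a semisimple ring, the regular module $_RR$ is a semisimple $R$-module, and the hypothesis that $R$ has a nontrivial left ideal guarantees that $_RR$ is not simple, so Theorem \ref{thm complete graph} applies to $M={_R}R$. First I would invoke the final (``In particular'') assertion of that theorem: for the semisimple module $_RR$, the graph $\mS(R)$ is complete if and only if $_RR=S_1\oplus S_2$ is a direct sum of two simple left $R$-modules, i.e. of two minimal left ideals.

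Next I would use Wedderburn--Artin to write $R\cong\prod_{i=1}^{k}M_{n_i}(D_i)$ with each $D_i$ a division ring. Decomposing the regular module factor by factor gives $_RR\cong\bigoplus_{i=1}^{k}V_i^{\,n_i}$, where $V_i\cong D_i^{\,n_i}$ is the (unique up to isomorphism) simple left module attached to the $i$-th block and $V_i\not\simeq V_j$ for $i\neq j$; in particular $_{M_{n_i}(D_i)}M_{n_i}(D_i)$ splits as $n_i$ copies of its simple column module. Hence the composition length of $_RR$ equals $\sum_{i=1}^{k}n_i$. Since the number of simple summands of a semisimple module is an isomorphism invariant, the condition that $_RR$ be a direct sum of exactly two simple modules is equivalent to $\sum_{i=1}^{k}n_i=2$.

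The remaining step is purely arithmetic. The only solutions of $\sum_{i=1}^{k}n_i=2$ with each $n_i\geq 1$ are $k=1,\ n_1=2$, giving $R\cong M_2(D)$, or $k=2,\ n_1=n_2=1$, giving $R\cong D_1\times D_2$. Conversely, in both of these cases $_RR$ is visibly a direct sum of two simple modules, so Theorem \ref{thm complete graph} returns that $\mS(R)$ is complete, closing the equivalence. I do not expect a genuine obstacle here: the only point requiring care is the passage from the regular module to the Wedderburn blocks---namely the composition-length computation $\operatorname{length}({_R}R)=\sum_i n_i$---after which the conclusion is a one-line count.
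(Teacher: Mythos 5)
Your proof is correct and follows exactly the route the paper intends: the paper states this corollary without proof, as an immediate consequence of the ``In particular'' clause of Theorem \ref{thm complete graph} applied to $M={_R}R$ (equivalently Corollary \ref{b6}), with the Wedderburn--Artin length count $\sum_i n_i=2$ left to the reader. Your write-up simply makes that implicit step explicit, and all details (the invariance of composition length and the two arithmetic cases) are handled correctly.
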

In Theorem \ref{thm complete graph}  a characterization of modules  with a   complete sum-essential graph was given.  The following theorem shows that    $k$-regular sum-essential graph  have to be complete.
  \begin{theorem}\label{b5}
 Let $M$ be a nonzero nonsimple module and $k\in \N$. The following conditions are equivalent:
 \begin{enumerate}
   \item  The graph $\mS (M)$ is $k$-regular;
     \item $\mS (M)$ is a complete graph and $|\mV(\mS(M))|=k+1$.
 \end{enumerate}
   \end{theorem}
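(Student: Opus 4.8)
The plan is to establish the nontrivial implication $(1)\Rightarrow(2)$, as $(2)\Rightarrow(1)$ is immediate: in a complete graph on $k+1$ vertices each vertex is adjacent to the remaining $k$ vertices, so the graph is $k$-regular. So assume $\mS(M)$ is $k$-regular. Since $k<\infty$, every vertex has finite degree, so Theorem \ref{thm finite deg S(M)} gives that $M$ has only finitely many submodules; put $n=|\mV(\mS(M))|<\infty$. As a $k$-regular graph has $\delta(\mS(M))=\Delta(\mS(M))=k$, and a graph on $n$ vertices with maximum degree $n-1$ is complete, it suffices to prove $\Delta(\mS(M))=n-1$; then $k=n-1$ and both parts of (2) follow.

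If $M$ is not semisimple it has a proper essential submodule, and by the remarks of Section 1 we already have $\Delta(\mS(M))=n-1$. The real work is the semisimple case. Here Theorem \ref{thm finite deg S(M)}(4)(ii) yields $M=\bigoplus_{i=1}^m S_i$ with each $S_i$ simple, so $M$ has finite composition length $m\ge 2$, and two submodules are adjacent precisely when their sum equals $M$. I claim $m=2$. Supposing $m\ge 3$, I will exhibit two vertices of different degree, contradicting regularity.

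For the degree comparison, let $S$ be a simple submodule and $A$ a complement of $S$, so that $M=S\oplus A$ and $A$ is maximal of length $m-1\ge 2$. The neighbours of $S$ are exactly the complements of $S$ (since $S$ is simple, $S+B=M$ with $B\ne M$ forces $S\cap B=0$), and $A$ is one of them. Every complement of $S$ other than $A$ is not contained in the maximal submodule $A$, hence is adjacent to $A$; and $S\not\subseteq A$, so $S$ is adjacent to $A$ as well. Thus $A$ has at least $\deg_{\mS(M)}(S)$ neighbours. Writing $A=S'\oplus A'$ with $S'$ simple and $A'\ne 0$ (possible as $m-1\ge 2$), the submodule $S\oplus A'$ satisfies $(S\oplus A')+A=M$, so it is adjacent to $A$, yet it contains $S$ and has length $m-1$, so it is neither $S$ nor a complement of $S$. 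Hence $\deg_{\mS(M)}(A)\ge \deg_{\mS(M)}(S)+1$, contradicting $k$-regularity. Therefore $m=2$, i.e. $M=S_1\oplus S_2$, and $\mS(M)$ is complete by Corollary \ref{b6} (equivalently Example \ref{basic ex}).

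I expect the semisimple case to be the main obstacle: there is no guaranteed universal vertex to fix $\Delta(\mS(M))$, so one must instead rule out composition length $\ge 3$ by the explicit count above, which has to be arranged to work regardless of whether the simple summands are isomorphic. This is why Example \ref{ex1}, valid only for pairwise non-isomorphic summands, cannot be invoked directly, and one argues with a single simple submodule $S$, its complement $A$, and the auxiliary neighbour $S\oplus A'$ of $A$.
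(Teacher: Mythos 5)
Your proof is correct and takes essentially the same route as the paper: the non-semisimple case is handled by the universal-vertex remark, and in the semisimple case both arguments compare the degree of a simple submodule $S$ with that of a complement of $S$ (a maximal submodule), your extra neighbour $S\oplus A'$ playing exactly the role of the paper's observation that such a complement is adjacent to every proper submodule containing $S$, which $k$-regularity then rules out. Your detour through Theorem \ref{thm finite deg S(M)} and composition length is harmless but unnecessary; the paper's count works directly, without first establishing finiteness of the vertex set.
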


 \begin{proof}
$(1)\Rightarrow(2)$ Let $\mS (M)$ be a $k$-regular. Suppose $M$ contains a proper essential submodule $E$. Then $E$ is adjacent to any $ A\in \mV(\mS(M))\setminus \{E\}$. Then $|\mV(\mS(M))|=k+1$ and $\mV(M)$ is a complete graph.

 Assume now that $M$ does not have proper essential submodule, i.e. $M$ is semisimple. Let $S$ be a simple submodule of $M$. Then $k$ is the number of maximal submodules of $M$ not containing $S$. Let $N_1,\ldots ,N_k$ be  such maximal submodules.  Then $N_1$ is adjacent to  any $N_i$, $1<i\leq k$ and to arbitrary proper submodule of $M$ containing $S$. The fact that  $\mS(M)$ is $k$-regular implies that $M$ has no proper submodules containing $S$. Thus the semisimple module $M=S\oplus Q$ for suitable simple submodule $Q$ of $M$. Now the result is a consequence of Example \ref{basic ex}.

  The implication $(2)\Rightarrow(1)$ is clear.
 \end{proof}
The following theorem presents the structure of modules having triangle-free proper sum-essential graphs.

 \begin{theorem}\label{thm S triangle free}
 Let $M$ be a module such that $\mid  \mV(\mS (M)) \mid\geq 2$. The following conditions are equivalent:
 \begin{enumerate}
   \item The graph $\mS (M)$ is    triangle-free;
   \item  Either $M$ is a direct sum of two non-isomorphic simple modules or $M$ is a chain module with exactly two proper submodules.
   \item   $\mS (M)\simeq \mathcal{K}_2$ the complete
 graph with two vertices.
 \end{enumerate}
   \end{theorem}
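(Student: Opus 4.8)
The plan is to establish the cycle $(3)\Rightarrow(1)\Rightarrow(2)\Rightarrow(3)$. The implication $(3)\Rightarrow(1)$ is immediate, since $\mathcal{K}_2$ has only two vertices and hence no $3$-cycle. For $(2)\Rightarrow(3)$ I would treat the two alternatives separately. If $M=S_1\oplus S_2$ with $S_1\not\simeq S_2$ simple, then $\mathrm{Hom}_R(S_1,S_2)$ has a single element (the zero map), so Example \ref{basic ex} gives that $\mS(M)$ is complete with $|\mathrm{Hom}_R(S_1,S_2)|+1=2$ vertices, i.e. $\mS(M)\simeq\mathcal{K}_2$. If instead $M$ is a chain module whose only nontrivial submodules are $A\subsetneq B$, then every nonzero submodule of $M$ is essential (any two nonzero submodules are comparable, hence meet nontrivially), so $A+B=B\ess M$ and the two vertices $A,B$ are adjacent; thus again $\mS(M)\simeq\mathcal{K}_2$.

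The substance of the theorem is $(1)\Rightarrow(2)$, which I would prove by a case analysis according to whether $M$ is semisimple and whether it is uniform, the recurring tactic being to exhibit a triangle whenever $|\mV(\mS(M))|\ge 3$. So assume $\mS(M)$ is triangle-free with $|\mV(\mS(M))|\ge 2$. First suppose $M$ is semisimple; then a submodule is essential iff it equals $M$, so $A\backsim B$ means $A+B=M$. Writing $M=\bigoplus_i S_i$ as a direct sum of simple modules, if there are at least three summands I would pick three of them, say $S_1,S_2,S_3$, with complementary summand $N$ (possibly $0$); the three proper submodules $S_2\oplus S_3\oplus N$, $S_1\oplus S_3\oplus N$ and $S_1\oplus S_2\oplus N$ are distinct and have all pairwise sums equal to $M$, forming a triangle, a contradiction. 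Hence $M=S_1\oplus S_2$ is a sum of exactly two simple modules; and if $S_1\simeq S_2$, Example \ref{basic ex} makes $\mS(M)$ complete on $|End(S_1)|+1\ge 3$ vertices, again yielding a triangle. Thus $S_1\not\simeq S_2$, the first alternative of (2).

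Now suppose $M$ is not semisimple, so it has a proper essential submodule, which is automatically a universal vertex of $\mS(M)$. If $M$ is uniform then every nonzero submodule is essential, hence universal, so any three vertices would be pairwise adjacent; triangle-freeness forces $|\mV(\mS(M))|=2$, and the two nontrivial submodules $A,B$ satisfy $A\cap B\ne 0$ by uniformity, whence they are comparable and $0\subset A\subset B\subset M$ exhausts the submodule lattice, giving the chain-module alternative of (2). The remaining case, $M$ not semisimple and not uniform, is the one I expect to be the main obstacle, and here I would build a triangle explicitly. Fix a proper essential submodule $E$; since $E\ess M$ we have $\ud(E)=\ud(M)\ge 2$, so there are nonzero submodules $A,B\subseteq E$ with $A\cap B=0$. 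Let $C$ be a complement of $A\oplus B$ in $M$, and set $P=A\oplus C$ and $Q=B\oplus C$. Then $P+Q=A\oplus B\oplus C\ess M$, so $P\backsim Q$; directness of $A\oplus B\oplus C$ gives $P\ne Q$ and, using $B\subseteq E$ (respectively $A\subseteq E$), also $P\ne E\ne Q$; and since $E$ is universal, $E\backsim P$ and $E\backsim Q$. Thus $\{E,P,Q\}$ is a triangle, contradicting (1).

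Therefore the non-uniform non-semisimple case cannot occur under (1), and the two surviving cases deliver exactly the two alternatives of (2); this proves $(1)\Rightarrow(2)$ and closes the cycle. The only delicate point is the last triangle construction: the design choice that makes it work is taking $A,B$ inside the essential submodule $E$ and using $E$ itself as the apex of the triangle, which sidesteps the awkward possibility $A\oplus B\oplus C=M$ that would arise if one tried to use $A\oplus B\oplus C$ as the third vertex.
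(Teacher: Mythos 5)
Your proof is correct --- each of the three triangle constructions you use does check out --- but it is organized around a genuinely different case decomposition than the paper's. The paper's proof of $(1)\Rightarrow(2)$ first bounds the uniform dimension: any essential direct sum $P\oplus Q\oplus S$ of nonzero submodules yields the triangle $P\oplus Q,\ P\oplus S,\ Q\oplus S$, so $\ud(M)\le 2$; it then splits on $\ud(M)=1$ (which forces the chain alternative) versus $\ud(M)=2$, where a uniform submodule $P$ and a complement $Q$ are played against each other: if $P$ had a proper nonzero submodule $S$ then $S\oplus Q,\ P,\ Q$ would be a triangle, if $P\oplus Q$ were proper then $P,\ Q,\ P\oplus Q$ would be one, and if $P\simeq Q$ then three distinct simple submodules would form one --- so $M=P\oplus Q$ with $P\not\simeq Q$ simple. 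You instead split on semisimple / uniform / neither. Your semisimple case is in essence the paper's $\ud\ge 3$ triangle, combined with Example \ref{basic ex} to exclude isomorphic summands (where the paper invokes a third, ``diagonal'' simple submodule); your uniform case matches the paper's $\ud(M)=1$ case (your brief ``whence they are comparable'' is legitimate: the nonzero proper submodule $A\cap B$ must be one of the only two vertices $A$, $B$). The genuinely new ingredient is your handling of the non-semisimple, non-uniform case by the triangle $\{E,\ A\oplus C,\ B\oplus C\}$ with a proper essential submodule $E$ as universal apex and $A,B\subseteq E$, $A\cap B=0$, $C$ a complement of $A\oplus B$; your distinctness checks ($B\cap(A\oplus C)=0$, etc.) are all valid, including when $C=0$. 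What each approach buys: the paper's dimension bound is more economical, since the single inequality $\ud(M)\le 2$ disposes of semisimple and non-semisimple modules uniformly and the $\ud(M)=2$ analysis \emph{derives} semisimplicity rather than treating it as a separate case; your version needs a three-way split but makes the role of universal vertices transparent and, in the hardest case, never needs to know $\ud(M)=2$ at all. You also supply the details of $(2)\Rightarrow(3)$, which the paper dismisses as clear.
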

\begin{proof}
$(1)\Rightarrow(2)$ Suppose $\mS (M)$ is a  triangle-free.  If $M$ would contain   an essential  direct sum $P\oplus Q\oplus S $ of nonzero submodules,
then  $P\oplus Q,P\oplus S, Q\oplus S $ would  form a triangle,  a contradiction. Hence
$\ud(M)\leq 2$.

If $\ud(M)=1$,  then any two vertices are adjacent, so $M$ has to be a chain module with two vertices only.

Suppose  $\ud(M)=2$ and let $P$  be a uniform submodule  and $Q$ its complement in  $M$.
  If $P$ would contain a proper nonzero submodule $S$ then $S\oplus Q,P$ and $Q$ would form a
  triangle. Thus $P$ and $Q$ are simple and $P\oplus Q\ess M$. If $P\oplus Q$ would be a proper
   submodule, then we would have a triangle with vertices $P, Q, P\oplus Q$. Therefore
    $M=P\oplus Q$ in this case. Moreover    $P$ and $Q$ are not isomorphic, as three distinct simple submodules
   would form a triangle.  This completes the proof.

Implications $(2)\Rightarrow(3)\Rightarrow(1)$ are clear.
\end{proof}

\begin{cor}\label{cor girth of S}
Let $M$ be a nonzero nonsimple module. Then the girth of the graph $\mS (M)$ is either $3$ or $\infty$.

\end{cor}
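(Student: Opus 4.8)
The plan is to derive this corollary essentially for free from Theorem \ref{thm S triangle free}, which has already done the structural work. Recall that the girth $g(\mS(M))$ is the length of a shortest cycle, and in a simple graph the shortest possible cycle has length $3$ (a triangle). So the dichotomy ``$g=3$ or $g=\infty$'' is exactly the assertion that $\mS(M)$ either contains a triangle or contains no cycle at all; equivalently, that a triangle-free $\mS(M)$ must in fact be acyclic.

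First I would split into the two natural cases. If $\mS(M)$ contains a triangle, then a cycle of length $3$ exists and, since no cycle can be shorter in a simple graph, $g(\mS(M))=3$. This handles one alternative immediately. The remaining task is to show that if $\mS(M)$ is triangle-free, then $g(\mS(M))=\infty$, i.e. that $\mS(M)$ has no cycle whatsoever.

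For the triangle-free case I would invoke Theorem \ref{thm S triangle free} directly: under the standing hypothesis $|\mV(\mS(M))|\geq 2$, triangle-freeness of $\mS(M)$ is equivalent to $\mS(M)\simeq\mathcal{K}_2$, the complete graph on two vertices. Since $\mathcal{K}_2$ is a single edge, it contains no cycle, and therefore $g(\mS(M))=\infty$ by the convention that a graph without cycles has infinite girth. Combining the two cases gives the claimed dichotomy.

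I do not expect any real obstacle here, because all the delicate module-theoretic analysis was carried out in the proof of Theorem \ref{thm S triangle free}; the only point worth stating carefully is that the corollary's hypothesis ($M$ nonzero and nonsimple) together with the running assumption $|\mV(\mS(M))|\geq 2$ is precisely what is needed to apply that theorem, and that $\mathcal{K}_2$ having no cycle is what converts ``triangle-free'' into ``girth $\infty$'' rather than into some intermediate finite value.
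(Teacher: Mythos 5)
Your proof is correct and matches the paper's own argument almost verbatim: a triangle gives girth $3$, and in the triangle-free case Theorem \ref{thm S triangle free} forces $\mS(M)\simeq\mathcal{K}_2$, which has no cycles, so the girth is $\infty$. The only cosmetic difference is that the paper's proof also explicitly covers the possibility that $\mS(M)$ is a single vertex (allowed by the corollary's hypothesis ``nonzero nonsimple'' alone), whereas you dispose of that case by appealing to the running assumption $|\mV(\mS(M))|\geq 2$; either way that graph has no cycle and girth $\infty$.
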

\begin{proof}
  If $\mS (M)$ contains a triangle then
$g(\mS (M))=3$. If $\mS (M)$ is triangle-free then,
   by  Theorem \ref{thm S triangle free}, $\mS (M) = \mathcal{K}_2$ or $\mS (M)$ is a single point
    and    $g(\mS (M))=\infty$.
\end{proof}
Remark that bipartite graphs are triangle-free, so  the above corollary applies in this case.

\begin{definition}\label{S.d.}
We say that submodules  $A$ and $B$   of a module $M$  are \it{strongly disjoint}   if  there are no nonzero isomorphic submodules $X, Y $ of $M$ such that $X\subseteq A$ and $Y\subseteq B$.
\end{definition}

The following lemma offers some other characterizations of strongly disjoint submodules.
\begin{lem}\label{sd2}
For submodules $A$ and $B$ of    a left $R$-module $M$   the
following conditions are equivalent:
\begin{enumerate}
  \item  $A$ and $B$ are strongly disjoint;
  \item For any   nonzero elements $a\in A$ and $b\in
B$, $\ann(a)\neq \ann(b)$;
  \item \begin{enumerate}
          \item[(i)]  $A\cap B=0$;
          \item[(ii)] If $U$ is a nonzero submodule
of $A+B$, then $U\cap A\neq0$ or $U\cap B\neq0$.
        \end{enumerate}
\end{enumerate}
\end{lem}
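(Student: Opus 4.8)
The plan is to establish $(1)\Leftrightarrow(2)$ and $(1)\Leftrightarrow(3)$ separately, since each pairing rests on a different elementary idea. Throughout I would lean on two basic observations. First, any $R$-module isomorphism $\psi\colon X\to Y$ preserves annihilators of corresponding elements, i.e. $\ann(x)=\ann(\psi(x))$ for every $x\in X$, because $rx=0$ if and only if $\psi(rx)=r\psi(x)=0$. Second, for nonzero $a\in A$ and $b\in B$ with $\ann(a)=\ann(b)$, the assignment $ra\mapsto rb$ is a well-defined $R$-isomorphism $Ra\to Rb$; it is well-defined and injective precisely because the two annihilators coincide.

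For $(1)\Leftrightarrow(2)$ I would argue by contraposition in both directions. If (2) fails, pick nonzero $a\in A$ and $b\in B$ with $\ann(a)=\ann(b)$; then $Ra\subseteq A$ and $Rb\subseteq B$ are nonzero isomorphic submodules, so $A$ and $B$ are not strongly disjoint. Conversely, if $A$ and $B$ are not strongly disjoint, take nonzero isomorphic $X\subseteq A$ and $Y\subseteq B$ with isomorphism $\psi$, choose $0\ne x\in X$, and set $y=\psi(x)\in B$; then $y\ne 0$ and $\ann(x)=\ann(y)$, so (2) fails.

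For $(1)\Leftrightarrow(3)$ I would first note that $A\cap B=0$ is forced by (1), since any nonzero element of $A\cap B$ yields the choice $X=Y=A\cap B$. Assuming $A\cap B=0$, so that $A+B=A\oplus B$, the decisive tool is the pair of projections $\pi_A,\pi_B$ onto the two summands. For $(1)\Rightarrow\text{(ii)}$: if some nonzero $U\subseteq A\oplus B$ met neither summand, then $\pi_A|_U$ and $\pi_B|_U$ would both be injective, their kernels being $U\cap B$ and $U\cap A$; hence $\pi_A(U)\subseteq A$ and $\pi_B(U)\subseteq B$ would be nonzero and both isomorphic to $U$, contradicting strong disjointness. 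For $\text{(3)}\Rightarrow(1)$: given nonzero isomorphic $X\subseteq A$, $Y\subseteq B$ with isomorphism $\psi$, I would form the diagonal submodule $U=\{x+\psi(x)\mid x\in X\}\subseteq A\oplus B$, which is nonzero, and check that $U\cap A=U\cap B=0$, violating (ii).

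The main subtlety to get right is the $\text{(3)}\Rightarrow(1)$ direction, namely verifying that the diagonal $U$ genuinely avoids both summands: an element $x+\psi(x)\in A$ forces its $B$-component $\psi(x)$ to vanish, since $A\cap B=0$, hence $x=0$ because $\psi$ is injective, and symmetrically for $B$. Once these intersection computations are secured the argument closes at once; the remaining work is routine bookkeeping with annihilators and projections.
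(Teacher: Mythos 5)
Your proof is correct, but it is organized differently from the paper's, and the key step toward (3)(ii) uses a different idea. The paper proves the single cycle $(1)\Rightarrow(2)\Rightarrow(3)\Rightarrow(1)$, and its step $(2)\Rightarrow(3)$ is an element computation: write $0\ne u=a+b\in U$ with $a\in A$, $b\in B$; if both are nonzero, use $\ann(a)\ne\ann(b)$ to pick $r\in R$ annihilating exactly one of them, so that $ru$ is a nonzero element of $U\cap A$ or of $U\cap B$. You instead prove $(1)\Leftrightarrow(2)$ and $(1)\Leftrightarrow(3)$ separately, and your route to (3)(ii) goes directly from strong disjointness via the projections of $A\oplus B$: a nonzero $U$ avoiding both summands has $\ker(\pi_A|_U)=U\cap B=0$ and $\ker(\pi_B|_U)=U\cap A=0$, so $\pi_A(U)\simeq U\simeq\pi_B(U)$ are nonzero isomorphic submodules of $A$ and $B$, contradicting (1). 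The remaining ingredients coincide: both treatments obtain $(3)\Rightarrow(1)$ from the diagonal submodule $\{x+\psi(x)\mid x\in X\}$ (with the same intersection computations, resting on $A\cap B=0$), and both relate (1) and (2) through the fact that cyclic modules with equal annihilators are isomorphic, which is the paper's remark that $Rc\simeq R/\ann(c)$. The paper's cycle is more economical (three implications instead of four), and its element-level argument exhibits concretely how condition (2) manufactures elements of $U\cap A$ or $U\cap B$; your projection argument is more structural, avoids decomposing elements, and makes transparent why a submodule of $A\oplus B$ meeting neither summand forces isomorphic copies inside $A$ and $B$. One small point to make explicit when writing it up: in your $(3)\Rightarrow(1)$ direction, the diagonal $U$ is nonzero because $x+\psi(x)=0$ would give $x=-\psi(x)\in A\cap B=0$, which uses (3)(i); since (ii) only constrains nonzero submodules, this check is needed.
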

\begin{proof}
  The implication  $(1)\Rightarrow (2)$ is clear as  any cyclic  $R$-module $Rc$  isomorphic to $ R/\ann(c) $.

  $(2)\Rightarrow (3)$ Suppose (2) holds. Then clearly $A\cap B=0$. Let $U$ be a nonzero submodule of $A+B$ and $ 0\ne u=a+b$, with $a\in A$ and $b\in B$.  If one of $a$  and $b$ is equal to zero, then the result holds. If both elements are nonzero we can use (2) to find an element $r\in R$ which belongs to exactly one of the sets $\ann(a)$ and $\ann(b)$. Then  either $  ru=rb\ne 0  $ or $ru=ra\ne 0$ and the result follows.

$(3)\Rightarrow (1)$ Let $X\subseteq A$ and $Y\subseteq B$ be isomorphic submodules. Let us consider  the submodule $U=\{x+\phi(x)\mid  x\in X \}\subseteq A+B$, where $\phi\colon X\rightarrow Y$ is a given isomorphism.  Let  $  u\in U$ and   $x\in X$ be such that $u=x+\phi(x)$. If $u\in U\cap A$, then  $\phi(x)=u-x\in A\cap B=0$ and $U\cap A=0$ follows. Similarly, if $u\in U\cap B$, then $x=u-\phi(x)\in A\cap B=0$ yields   $U\cap B= 0$.    Therefore, by (3), $X \simeq U=0$, i.e. $A$ and $B$ are strongly disjoint.
\end{proof}

\begin{theorem}\label{triangle1} Let $M$ be a module with nonempty graph $\mN (M)$.
The following   conditions are equivalent:
\begin{enumerate}
  \item  $\mN (M)$ is triangle-free;
  \item \begin{enumerate}
          \item[(i)]   $\ud(M)=2$ \emph{(}so any nonzero, nonessential submodule is uniform\emph{)};
          \item[(ii)]  If $A,B \in \mV(\mN (M))$ and $A+B\ess M$, then
$A$ and $B$ are strongly disjoint.
        \end{enumerate}
        \item If $A,B \in \mV(\mN (M))$ and $A+B\ess M$, then
$A$ and $B$ are strongly disjoint.
\end{enumerate}
 \end{theorem}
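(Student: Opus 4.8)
The plan is to prove the three conditions equivalent along the scheme $(2)\Rightarrow(3)\Rightarrow(2)$ together with $(2)\Rightarrow(1)$ and $(1)\Rightarrow(2)$; note that $(2)\Rightarrow(3)$ is immediate, since (3) is literally clause (ii) of (2). The workhorses will be two elementary facts about uniform dimension. First, when $\ud(M)=2$, every nonzero nonessential submodule $N$ is uniform: its complement is nonzero, so $\ud(N)\le 2-1=1$ (this is exactly the parenthetical remark in (2)(i)). Second, if $N\subseteq M$ has $\ud(N)=\ud(M)<\infty$, then $N\ess M$, since otherwise a nonzero complement to $N$ would force $\ud$ above $\ud(M)$. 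I will also use the characterizations of strong disjointness from Lemma \ref{sd2}, in particular that strongly disjoint submodules meet in $0$ and that every nonzero submodule of $A+B$ meets $A$ or $B$.

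For $(3)\Rightarrow(2)$ it suffices to produce (2)(i), namely $\ud(M)=2$. Since $\mN(M)$ is nonempty, $M$ is not uniform, so $\ud(M)\ge 2$. If $\ud(M)\ge 3$, I would take a maximal independent family of uniform submodules $U_1,\dots,U_n$ with $n\ge 3$ and $\bigoplus_i U_i\ess M$, and set $A=\bigoplus_{i\ne 2}U_i$ and $B=\bigoplus_{i\ne 1}U_i$. Both are nonessential vertices with $A+B=\bigoplus_i U_i\ess M$, yet $A\cap B\supseteq\bigoplus_{i\ge 3}U_i\ne 0$, so $A$ and $B$ are not strongly disjoint, contradicting (3). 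Hence $\ud(M)=2$ and (2) follows.

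For $(2)\Rightarrow(1)$ I argue by contradiction from a triangle $A,B,C$ in $\mN(M)$. All three vertices are nonessential, hence uniform, and all three pairwise sums are essential, so (2)(ii) makes the three pairs pairwise strongly disjoint; in particular $A\cap B=A\cap C=B\cap C=0$. Since $A\oplus B\ess M$ and $C\ne 0$, the submodule $U:=C\cap(A\oplus B)$ is nonzero, but $U\cap A\subseteq C\cap A=0$ and $U\cap B\subseteq C\cap B=0$, which contradicts the description of strong disjointness in Lemma \ref{sd2}(3). This rules out triangles.

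The heart is $(1)\Rightarrow(2)$. First $\ud(M)=2$ exactly as above (an independent triple $P,Q,S$ with $P\oplus Q\oplus S\ess M$ produces the triangle $P\oplus Q,\ P\oplus S,\ Q\oplus S$). Then I prove (2)(ii) contrapositively: given $A,B\in\mV(\mN(M))$ with $A+B\ess M$ that are not strongly disjoint (both necessarily uniform), I must exhibit a triangle, and the main obstacle is handling the two cases of $A\cap B$. If $A\cap B\ne 0$, I take $C$ to be a complement of $A$, so that $A\oplus C\ess M$; since $A\cap B\subseteq A$ gives the direct sum $(A\cap B)\oplus C\subseteq B+C$ with $\ud((A\cap B)\oplus C)=2$, the second fact forces $B+C\ess M$, and $A,B,C$ is a triangle (distinctness follows at once from $C\cap A=0\ne A\cap B$). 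If instead $A\cap B=0$, then failure of strong disjointness yields an isomorphism $\phi\colon X\to Y$ between nonzero $X\subseteq A$ and $Y\subseteq B$; the diagonal $U=\{x+\phi(x)\mid x\in X\}$ satisfies $U\cap A=U\cap B=0$, so $A\oplus U$ and $B\oplus U$ each have uniform dimension $2$ and are therefore essential, giving the triangle $A,B,U$. Checking in each case that the third vertex is a genuine, distinct, nonessential (hence uniform with $\ud=1<2$) vertex is routine but is the point where care is needed.
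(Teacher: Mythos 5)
Your overall architecture is sound and, except for one step, stays close to the paper's proof: your $(2)\Rightarrow(1)$ argument is exactly the paper's proof of $(3)\Rightarrow(1)$ (it only uses clause (ii), which is fine), and your two-case argument for $(1)\Rightarrow(2)$(ii) is a correct variant of the paper's: where the paper verifies condition (3) of Lemma \ref{sd2} using a complement of $A\cap B$, you use a complement of $A$ together with the fact that a submodule of full finite uniform dimension is essential, and in the case $A\cap B=0$ you inline the diagonal-submodule trick that the paper delegates to Lemma \ref{sd2}. All of that checks out, including the distinctness and nonessentiality verifications.

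There is, however, a genuine gap in the one step you use twice, namely the derivation of $\ud(M)=2$ in both $(3)\Rightarrow(2)$ and $(1)\Rightarrow(2)$. You claim that if $\ud(M)\geq 3$ then $M$ has a maximal independent family of \emph{uniform} submodules $U_1,\dots,U_n$ with $n\geq 3$ and $\bigoplus_i U_i\ess M$. This is true when $3\leq \ud(M)<\infty$, but the hypothesis you must contradict includes $\ud(M)=\infty$, and a module of infinite uniform dimension need not contain any uniform submodules at all. For example, take $M={_R}R$ with $R=k\langle x,y\rangle$ a free algebra: for any $0\neq a\in R$ one has $Rax\cap Ray=0$, so every nonzero left ideal contains a direct sum of two nonzero left ideals; hence $R$ has no uniform left ideals, $\ud({_R}R)=\infty$, and yet $\mN({_R}R)$ is nonempty (e.g. $Rx$ is a nonzero nonessential left ideal), so the theorem's hypothesis applies to this module. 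For such an $M$ the maximal independent family of uniform submodules is empty, your construction cannot start, and no contradiction is reached. The repair is easy and is what the paper implicitly does: from $\ud(M)\geq 3$ take any three independent nonzero submodules $V_1,V_2,V_3$, let $C$ be a complement of $V_1\oplus V_2\oplus V_3$ in $M$, and set $W=V_3\oplus C$, so that $V_1\oplus V_2\oplus W\ess M$. Then your pair $A=V_1\oplus W$, $B=V_2\oplus W$ (for $(3)\Rightarrow(2)$), respectively the triangle $V_1\oplus V_2$, $V_1\oplus W$, $V_2\oplus W$ (for $(1)\Rightarrow(2)$), works verbatim: uniformity of the summands was never needed, only that they are nonzero and independent with essential sum.
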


\begin{proof} (1)$\Rightarrow$(2) Assume $\mN (M)$ is triangle-free. By assumption  $M$
 is not uniform. The argument used at the beginning of the proof of Theorem \ref{thm S triangle free} shows that $\ud(M)\leq 2$, i.e. $(2)(i)$ holds.

 Let  $A,B \in \mV(\mN (M))$ and $A+B\ess M$.    Let $N$=$A\cap
B $ and $N'$ be the complement to $N$ in $M$.  If $N$ would be nonzero, then $N'$ would be a proper nonessential submodule of $M$ and we would have a triangle with vertices  $A,B, N' $ (as $A+B\ess M$).   This is impossible, so $A \cap B=0$. Let $U$ be a uniform nonzero submodule of $A+B$. Using (2)(i) we see that $A\cap U\ne 0$ or $B\cap U\ne 0$, as otherwise   $A,B,U$ would form a triangle. Now (2)(ii) is a consequence of Lemma \ref{sd2}.

The implication $(2)\Rightarrow(3)$ is clear.

$(3)\Rightarrow(1)$ Let $A,B,C\in \mV(\mN(M))$ be such that $A+C\ess M$ and $B+C\ess M$. In particular,  by  assumption, we have  $A\cap C=0=B\cap C$. Let $U=C\cap (A+B)$. Notice that  $A+B\not\ess M$, as otherwise $U\ne 0$ and (3) would imply that  either $U\cap A\ne 0$ or $U\cap B\ne 0$, which is impossible. This shows that $\mN(M) $ is triangle free.
\end{proof}

\begin{theorem} \label{treeta}Let $M$ be a module with $\mV(\mN (M))\ne \emptyset$. Then
the following conditions are equivalent:
\begin{enumerate}
  \item  $\mN (M)$ is a tree;
 \item If $A,B \in \mV(\mN (M))$ and $A+B\ess M$, then
$A$ and $B$ are strongly disjoint and one of $A$ and $B$ is a simple module;

   \item  $\mN (M)$ is a star graph with the
center $S$, for a simple submodule $S$ of $M$.

\end{enumerate}
 \end{theorem}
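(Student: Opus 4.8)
The plan is to prove the cycle of implications $(3)\Rightarrow(1)\Rightarrow(2)\Rightarrow(3)$. The implication $(3)\Rightarrow(1)$ is immediate: a star is connected, and any cycle would have to contain two distinct leaves that are adjacent, which never happens in a star, so it is acyclic and hence a tree.

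For $(1)\Rightarrow(2)$, I would first observe that a tree contains no cycles and is in particular triangle-free, so Theorem \ref{triangle1} applies and yields $\ud(M)=2$, the fact that every nonzero nonessential submodule is uniform, and that any $A,B\in\mV(\mN(M))$ with $A+B\ess M$ are strongly disjoint. It remains to force one of $A,B$ to be simple, which I would do by contradiction. If both are non-simple, choose proper nonzero submodules $A'\subsetneq A$ and $B'\subsetneq B$; since $A,B$ are uniform, $A'\ess A$ and $B'\ess B$, and strong disjointness gives $A\cap B=0$. Consequently $A\oplus B$, $A'\oplus B$, $A\oplus B'$ and $A'\oplus B'$ are all essential in $M$, and one checks that $A,B,A',B'$ are pairwise distinct, producing the $4$-cycle $A\backsim B\backsim A'\backsim B'\backsim A$, contrary to $\mN(M)$ being a tree.

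The main step is $(2)\Rightarrow(3)$. Since (2) entails the strong-disjointness hypothesis of Theorem \ref{triangle1}, again $\ud(M)=2$ and every vertex is uniform. The crucial counting observation is that two distinct simple submodules $S,S'$ satisfy $S\cap S'=0$, so $S\oplus S'$ has uniform dimension $2$ and is essential in $M$, hence they are adjacent; triangle-freeness therefore forbids three distinct simple submodules. As $\mN(M)$ is nonempty it is connected with at least two vertices and so has an edge, and (2) guarantees that every edge has a simple endpoint; thus $M$ has exactly one or exactly two simple submodules, i.e. $soc(M)$ is either simple or a direct sum $S_1\oplus S_2$ of two non-isomorphic simples. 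If $soc(M)=S$ is simple, then $S$ is the only simple submodule and hence an endpoint of every edge; since $\delta(\mN(M))\ge 1$ every vertex is adjacent to $S$, so $\mN(M)$ is a star with center $S$. If $soc(M)=S_1\oplus S_2$, then $soc(M)\ess M$ (it has full uniform dimension), so each uniform vertex $V$ has simple socle $V\cap soc(M)\in\{S_1,S_2\}$; partitioning the vertices into $\mathcal{U}_1,\mathcal{U}_2$ by their socle, a socle computation shows that a vertex of $\mathcal{U}_1$ meets a vertex of $\mathcal{U}_2$ trivially (so they are adjacent), while two vertices in the same class share that class's simple socle, are not strongly disjoint, and hence are non-adjacent by (2). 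Thus $\mN(M)$ is complete bipartite on $\mathcal{U}_1\cup\mathcal{U}_2$; as the only simple vertex of each class is $S_i$, condition (2) forbids an edge between two non-simple vertices and collapses one class to the singleton $\{S_i\}$, making $\mN(M)$ a star with the simple center $S_i$.

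The step I expect to be the main obstacle is this bipartite analysis. The natural but false shortcut is to try to prove that the sum of two uniform submodules with nonzero intersection is again uniform; this fails in general (for instance the two cyclic order-$p^2$ submodules $\Z(1,0)$ and $\Z(1,p)$ of $\Z_{p^2}\oplus\Z_{p^2}$ have nonzero intersection yet their sum is essential). The correct route is to avoid this entirely and instead exploit the two clauses of hypothesis (2) separately: the strong-disjointness clause together with Lemma \ref{sd2}(3)(i) eliminates all intra-class edges, and the simple-endpoint clause is what ultimately forces one side of the bipartition to be a single point.
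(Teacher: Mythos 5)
Your proof is correct and follows essentially the same route as the paper: the implication $(1)\Rightarrow(2)$ uses the identical $4$-cycle $A\backsim B\backsim A'\backsim B'\backsim A$ built from proper submodules, and $(2)\Rightarrow(3)$ likewise invokes Theorem \ref{triangle1} to get $\ud(M)=2$, bounds the number of simple submodules by two via triangle-freeness, and splits on whether $soc(M)$ is simple. Your ``complete bipartite, then collapse one side'' framing of the second case is just a more explicit packaging of the paper's observation that one of the two simples admits no proper essential extension, so the arguments coincide in substance.
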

\begin{proof}
   \indent(1)$\Rightarrow$(2) Every tree is triangle-free, so the first part of the statement (2) is given by Theorem \ref{triangle1}.
   Let $A,B\in \mV(\mN(M))$ be such that  $A+B\ess M$. In case we would have proper nonzero submodules $X$ and $Y$ of $A$ and $B$, respectively,
 we would have a cycle $X \backsim B \backsim A \backsim Y \backsim X$ in a tree. Hence one of $A$, $B$ has to be a simple module, i.e. (2) holds.

 $(2)\Rightarrow(3)$ Suppose (2) holds. Then $M$ contains a simple submodule, call it $S$.  Theorem \ref{triangle1} shows that $\mN(M)$ is triangle free and  $\ud(M)=2$. In particular $M$ has at most two different simple submodules.

   If $soc(M)=S$ then, by (2),  $\mN (M)$ is a star graph with center $S$.

  Suppose  $soc(M)$ is not simple. Since $\ud(M)=2$,  $S\oplus Q=soc(M)\ess M$ where $S,Q$ are the only simple submodules of $M$.    In particular, any $A\in \mV(\mN(M))$ contains one of $S$ or $Q$.  The statement (2) implies that one of the simples, say $S$, does not have proper essential extensions in $M$.
  Therefore, if  $S\ne A\in \mV(\mN(M))$, then $Q\subseteq A$ and $\mN(M)$ is a star graph with center $S$.

 The implication $(3)\Rightarrow(1)$  is clear.
 \end{proof}
 Let us remark that the proof of the implication $(2)\Rightarrow(3)$ gives
 the module structure of modules with  $\mN(M)$ being a tree.  The $\Z$-modules
 $\Z\oplus \Z_q$ or $\Z_{p^n}\oplus \Z_q$ are examples of such modules,
  where $p,q,n\in \N$ and $p\neq q$ are prime.
We have seen in Corollary \ref{cor girth of S} that the girth of
$\mS(M)$ is equal to 3 when finite. In case of the graph $\mN(M)$ we
have the following:
\begin{cor}
Suppose  $\mN (M) $  is a nonempty graph.    Then $g(\mN (M)) \in \{3,4,\infty \}$.
\end{cor}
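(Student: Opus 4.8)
The plan is to rule out girth $5$ or larger, splitting into two cases according to whether $\mN(M)$ is triangle-free. If $\mN(M)$ contains a triangle, then trivially $g(\mN(M))=3$ and we are done, so the substantive case is when $\mN(M)$ is triangle-free; here I want to show $g(\mN(M))\in\{4,\infty\}$, i.e. that $\mN(M)$ cannot contain a cycle of length $\geq 5$.

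First I would record the adjacency structure in the triangle-free case. By Theorem \ref{triangle1}, triangle-freeness forces $\ud(M)=2$, so every vertex of $\mN(M)$ is a uniform submodule of $M$, and moreover any two adjacent vertices $A,B$ (i.e. $A+B\ess M$) are strongly disjoint, whence $A\cap B=0$ by Lemma \ref{sd2}. Conversely, for two distinct vertices $A,B$ with $A\cap B=0$ the submodule $A\oplus B$ has uniform dimension $2=\ud(M)$ and is therefore essential in $M$, so $A\backsim B$. Thus, in the triangle-free case, two distinct vertices are adjacent if and only if they have zero intersection; equivalently, two distinct non-adjacent vertices meet nontrivially.

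Next I would suppose toward a contradiction that $\mN(M)$ has finite girth $g\geq 5$ and pick a shortest cycle $A_1\backsim A_2\backsim\cdots\backsim A_n\backsim A_1$ with $n=g\geq 5$. A shortest cycle is chordless, so any two of its vertices at cyclic distance $\geq 2$ are non-adjacent. I would then focus on the three distinct vertices $A_1,A_3,A_4$. Since $A_3\backsim A_4$, the previous paragraph gives $A_3\cap A_4=0$. On the other hand $A_1$ sits at cyclic distance $2$ from $A_3$ and, because $n\geq 5$, at cyclic distance $\geq 2$ from $A_4$, so $A_1$ is adjacent to neither; hence $A_1\cap A_3\neq 0$ and $A_1\cap A_4\neq 0$. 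As $A_1$ is uniform, these two nonzero submodules of $A_1$ must intersect, giving $0\neq (A_1\cap A_3)\cap(A_1\cap A_4)\subseteq A_3\cap A_4=0$, a contradiction.

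Consequently $\mN(M)$ contains no cycle of length $\geq 5$, so in the triangle-free case $g(\mN(M))$ equals $4$ (if a cycle exists) or $\infty$ (otherwise); together with the triangle case this yields $g(\mN(M))\in\{3,4,\infty\}$. I expect the only delicate point to be the adjacency characterization of the triangle-free case, namely deducing $A\cap B=0$ from adjacency via the strong-disjointness clause of Theorem \ref{triangle1}; once that is in hand the three-fold intersection argument inside a uniform module is immediate. One should also note that the argument genuinely needs $n\geq 5$: for $n=4$ the vertices $A_1$ and $A_4$ are adjacent, so no contradiction arises, which is exactly as it should be since $4$-cycles must remain possible.
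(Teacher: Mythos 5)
Your proof is correct, and it takes a genuinely different route from the paper's. The paper handles the triangle-free case via the tree characterization (Theorem \ref{treeta}): if $\mN(M)$ is triangle-free but not a tree, then the failure of condition (2) of that theorem, combined with the strong disjointness supplied by Theorem \ref{triangle1}, yields adjacent strongly disjoint vertices $A$ and $B$ neither of which is simple; choosing proper nonzero submodules $X\subset A$ and $Y\subset B$ then produces an explicit $4$-cycle $A\backsim B\backsim X\backsim Y\backsim A$, so the girth is $4$ for non-trees and $\infty$ for trees. You instead bypass Theorem \ref{treeta} entirely: from Theorem \ref{triangle1} and Lemma \ref{sd2} you extract that, in the triangle-free case, distinct vertices are adjacent exactly when they intersect trivially (the forward direction from strong disjointness, the converse because $\ud(M)=2$ forces the direct sum of two uniform vertices to be essential), and then uniformity of the vertices kills every chordless cycle of length at least $5$, since $A_1$ would have to meet both $A_3$ and $A_4$ nontrivially while $A_3\cap A_4=0$. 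Your argument is more local and elementary — it needs no information about trees or stars, and in fact proves slightly more, namely that a triangle-free $\mN(M)$ contains no chordless cycle of length $\geq 5$ at all — whereas the paper's detour through Theorem \ref{treeta} buys extra structure: it pins down exactly when the girth is $4$ versus $\infty$ (namely $g(\mN(M))=\infty$ iff $\mN(M)$ is a star) and exhibits concrete $4$-cycles built from submodules. Both arguments rest on the same foundation, Theorem \ref{triangle1}; the only point worth flagging in yours is the unproved (but standard) fact that a submodule whose uniform dimension equals the finite uniform dimension of $M$ must be essential, which is what makes your adjacency criterion work.
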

\begin{proof}
Assume $\mN (M)$ is a triangle-free graph, i.e. $g(\mN (M))\ne 3$.  If $\mN (M)$ is not a tree  then, using Theorems \ref{treeta} and \ref{triangle1} we can pick two strongly disjoint uniform submodules $A$ and $B$ in $\mN (M)$ such that   $A+B\ess M$ and neither $A$ nor
$B$ is simple.   Let $X$,  $Y$ be   proper nonzero submodules
of $A$ and $B$, respectively. Then
$X,Y, A,B $ form a  cycle, so $g(\mN (M))=4$.   This yields the thesis.
\end{proof}
 The girth of $\mN (M)$ can take any value as described above. If $\ud(M)>2$, then $g(\mN(M))=3$. If $\ud(M)=2$ consider the group
$M=\mathbb{Z}_{p^m}\oplus \mathbb{Z}_{q^n}$ as a
$\mathbb{Z}$-module, where $p,q\in \N$ are different primes and $n,m\geq 1$. Then any nonessential submodule of $M$ is contained either in $\Z_{p^m}$ or $\Z_{q^n}$ and  Theorem \ref{triangle1} (or direct argument) shows that $\mathcal{P}_\mathbb{Z}(M)$ is triangle free. If $n=1$, then $g(\mathcal{P}_\mathbb{Z}(M))=\infty$. If $n,m\geq 2$, then $g(\mathcal{P}_\mathbb{Z}(M))=4$.\\

  We close the paper with the following  result:
\begin{prop}\label{b7}
Suppose that the module $M$ has exactly $2\leq n<\infty$ maximal submodules. The following conditions are equivalent:
\begin{enumerate}
  \item   $\mS(M)$ is $n$-partite graph;
  \item  $M$ is a semisimple module.
\end{enumerate}
\end{prop}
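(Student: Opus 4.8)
The plan is to handle the two implications separately, the common tool being that the $n$ maximal submodules $N_1,\dots,N_n$ are pairwise adjacent in $\mS(M)$: distinct maximals satisfy $N_i+N_j=M\ess M$, so they form a clique $\mathcal{K}_n$.

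For $(2)\Rightarrow(1)$ I would use that in a semisimple module a sum of two submodules is essential if and only if it equals $M$, and that every proper nonzero submodule $A$ lies in some maximal submodule (write $M=A\oplus B$, split a simple summand $S$ off $B=S\oplus B'$, and take the maximal submodule $A\oplus B'$). Color each vertex by a maximal submodule containing it. Two equally colored vertices lie in a common $N_i$, so their sum is contained in $N_i\ne M$ and is not essential; hence each color class is independent. Since maximal submodules are pairwise incomparable, each receives its own color and all $n$ colors occur, yielding a partition of $\mV(\mS(M))$ into exactly $n$ nonempty independent sets.

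For $(1)\Rightarrow(2)$ I argue by contradiction. As the clique $\{N_1,\dots,N_n\}$ meets each part of an $n$-partition in at most one vertex, the maximals occupy all $n$ parts, one per part. Thus every vertex $A$ shares its part with a unique maximal $N_j$; non-adjacency (or $A=N_j$) forces $A+N_j\not\ess M$, whence $A\subseteq N_j$ and $N_j$ is non-essential (otherwise $A+N_j\supseteq N_j\ess M$). So the hypothesis is equivalent to: \emph{every non-maximal vertex is contained in a non-essential maximal submodule.} Assume now $M$ is not semisimple and pick a proper essential submodule $F$. Essential submodules are universal vertices, so $F$ is alone in its part, which must contain a maximal; hence $F$ is maximal. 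Any proper essential submodule strictly inside $F$ would again be maximal, which is absurd, so $F$ has no proper essential submodule, is therefore semisimple, and equals $soc(M)$. Consequently $soc(M)$ is an essential maximal submodule and $\mathrm{rad}(M)\subseteq soc(M)$.

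The contradiction is then extracted from $\mathrm{rad}(M)$. If $\mathrm{rad}(M)=0$ then $M\hookrightarrow\bigoplus_{i=1}^n M/N_i$ is semisimple, against our assumption. If $\mathrm{rad}(M)\ne 0$, take a simple $S\subseteq\mathrm{rad}(M)\subseteq soc(M)$ and a complement $soc(M)=S\oplus E'$. Then $E'$ is a nonzero (the case $E'=0$ forces $soc(M)=\mathrm{rad}(M)$, hence $soc(M)\subseteq N$ for some non-essential maximal $N$, which is impossible) non-maximal vertex lying in no non-essential maximal $N$, since $S\subseteq\mathrm{rad}(M)\subseteq N$ would give $soc(M)=S+E'\subseteq N$, contradicting that the distinct maximals $soc(M)$ and $N$ are incomparable. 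This violates the coloring constraint above, so $M$ must be semisimple. The step I expect to be the main obstacle is proving that $soc(M)$ is an essential maximal submodule and then the radical dichotomy: $\mathrm{rad}(M)=0$ yields semisimplicity directly, while $\mathrm{rad}(M)\ne 0$ must be converted into a concrete vertex that cannot be legally colored.
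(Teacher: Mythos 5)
Your proof is correct. The $(2)\Rightarrow(1)$ direction is essentially the paper's own argument: the paper makes your colouring canonical by putting a vertex $A$ into the class $V_k$ for the least index $k$ with $A\subseteq M_k$, and independence and nonemptiness are checked exactly as you do. The $(1)\Rightarrow(2)$ direction, however, is genuinely different. The paper's proof is a two-line clique argument: set $J=J(M)=\bigcap_{i=1}^n M_i$ and let $N$ be a complement to $J$; if $J\neq 0$ then $N,M_1,\dots,M_n$ are pairwise adjacent (each $N+M_i\supseteq N\oplus J\ess M$, and $M_i+M_j=M$), giving a clique on $n+1$ vertices, which an $n$-partite graph cannot contain; hence $J=0$ and $M$ embeds in the semisimple module $\bigoplus_{i=1}^n M/M_i$. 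You instead extract from the partition the constraint that every non-maximal vertex lies in a non-essential maximal submodule, deduce that any proper essential submodule must be maximal, conclude that $soc(M)$ is an essential maximal submodule, and then dispose of the two cases $\mathrm{rad}(M)=0$ (same embedding as the paper) and $\mathrm{rad}(M)\neq 0$ (the uncolourable vertex $E'$). Your route is longer, but it buys robustness: it never uses complements, whereas the paper's proof tacitly assumes $N\neq 0$ — if $J$ happens to be essential in $M$, its complement is zero and is not a vertex, and the argument needs the easy repair of using $J$ itself (essential, hence universal, and distinct from each $M_i$) as the extra clique vertex. Your proof has no such edge case.

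Two small presentational points. First, in the subcase $E'=0$ the assertion "hence $soc(M)\subseteq N$ for some non-essential maximal $N$" needs a word of justification: it follows by applying your colouring constraint to the vertex $\mathrm{rad}(M)$, which is non-maximal because $n\geq 2$ forces $\mathrm{rad}(M)\subseteq M_1\cap M_2\subsetneq M_1$ (alternatively, note that $soc(M)$ is maximal while $\mathrm{rad}(M)$ cannot be, which ends this subcase at once). Second, "the hypothesis is equivalent to" should be "the hypothesis implies", since you prove and use only that direction.
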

\begin{proof}Let $M_1,\ldots,M_n$ be the maximal submodules  of $M$, $J=J(M)=\bigcap_{i=1}^n  M_i$ and $N$ be the complement to $J$ in $M$.

 $(1)\Rightarrow(2)$    In case $J$ would be nonzero then $N, M_1,\ldots ,M_n$ would generate a complete subgraph of $\mS(M)$. Since $\mS(M)$ is $n$-partite graph, this is impossible and $J=0$ follows. This yields the thesis.

 $(2)\Rightarrow(1)$ Suppose $M$ is a semisimple module. For
  $1\leq k\leq n$, let us define $V_k=\{A\in \mV(\mS(M))\mid A\subseteq M_k$
  and $A \nsubseteq M_i$ for $ i<k \}$. Then, using semisimplicity of $M$ it is
  easy to see that $V_1, \ldots, V_n$ form $n$-partitioning subsets of $\mV(\mS(M))$.
 \end{proof}

 \bibliographystyle{elsarticle-num}

 \end{document}